\documentclass{amsart}
\usepackage{amssymb, amsmath, amsthm, graphics, comment, xspace, enumerate}
\baselineskip 18pt

\vfuzz2pt 
\hfuzz2pt 
\newtheorem{theorem}{Theorem}[section]

\newtheorem{lemma}[theorem]{Lemma}
\newtheorem{proposition}[theorem]{Proposition}
\theoremstyle{definition}
\newtheorem{definition}[theorem]{Definition}

\theoremstyle{remark}
\newtheorem{remark}[theorem]{Remark}
\numberwithin{equation}{section}


\begin{document}
\title{Subspace Almost Periodic $C$-Distribution Semigroups and $C$-Distribution Cosine Functions}
\author{Marko Kosti\' c}
\address{Faculty of Technical Sciences,
University of Novi Sad,
Trg D. Obradovi\' ca 6, 21125 Novi Sad, Serbia}
\email{marco.s@verat.net}

\author{Stevan Pilipovi\' c}
\address{Department for Mathematics and Informatics,
University of Novi Sad,
Trg D. Obradovi\' ca 4, 21000 Novi Sad, Serbia}
\email{pilipovic@dmi.uns.ac.rs}

\author{Daniel Velinov}
\address{Department for Mathematics, Faculty of Civil Engineering, Ss. Cyril and Methodius University, Skopje,
Partizanski Odredi
24, P.O. box 560, 1000 Skopje, Macedonia}
\email{velinovd@gf.ukim.edu.mk}

\newcommand{\FilMSC}{Primary 47D06, 47D60; Secondary 47B37, 47D60, 47D62.}
\newcommand{\FilKeywords}{$C$-distribution semigroups, $C$-distribution cosine functions, integrated $C$-semigroups, integrated $C$-cosine functions, subspace almost periodicity}
\newcommand{\FilCommunicated}{Dragan S. Djordjevi\'c}
\newcommand{\FilSupport}{Research supported by
grant 174024 of Ministry of Science and Technological Development, Republic of Serbia.}

\begin{abstract}
The main aim of this paper is to introduce and analyze the notions of subspace almost periodicity and subspace weak almost periodicity for $C$-distribution semigroups and
$C$-distribution cosine functions in Banach spaces. We continue our previous research study of almost periodicity of abstract Volterra integro-differential equations \cite{aot}, focusing our attention on the abstract ill-posed Cauchy problems of first order.
\end{abstract}

\maketitle

\makeatletter
\renewcommand\@makefnmark%
{\mbox{\textsuperscript{\normalfont\@thefnmark)}}}
\makeatother

\section{Introduction and preliminaries}\label{intro}
As mentioned in the abstract, we continue our previous research study \cite{aot} by enquiring into the basic subspace almost periodic and subspace weak almost periodic properties of $C$-distribution semigroups and
$C$-distribution cosine functions in Banach spaces. The introduction of notions of subspace almost periodicity and subspace weak almost periodicity is motivated by the fact that the integral generators of almost periodic strongly continuous semigroups and almost periodic  integrated $C$-semigroups explored in the existing literature need to satisfy rather restrictive spectral conditions (since we primarily deal with semigroups and cosine operator functions consisted of unbounded linear operators, the notion of subspace uniform almost periodicity will not attract our attention here). On the other hand, a great number of chaotic or subspace chaotic $C$-distribution semigroups and $C$-distribution cosine functions satisfying certain kinds of the Desch-Schappacher-Webb criterion are subspace almost periodic, as well, with the subspace of almost periodicity being generally dense in the initial Banach space. Because of that, we can freely say that the notion of subspace almost periodicity is incredibly important considered from the application's point of view.

The classes of (bounded) almost periodic distribution groups and cosine distributions were considered for the first time by I. Cioranescu \cite{ioana} in 1990. Six years later, in 1996, Q. Zheng and L. Liu \cite{zhengliu} investigated the class of almost periodic tempered distribution semigroups. Almost periodic distribution (semi-)groups and cosine distributions considered in \cite{ioana} and \cite{zhengliu} are exponential and have densely defined integral generators, which is not the case with $C$-distribution semigroups and
$C$-distribution cosine functions considered in this paper.
The reader with a little experience will easily observe that our approach is completely different from those employed in \cite{ioana} and \cite{zhengliu}: speaking only in terms of global $n$-times integrated semigroups and cosine functions, here we are interested in question whether $n$-th derivatives of such semigroups and cosine functions exist and are almost periodic on a certain subspace $\tilde{E}$ of the pivot space $E$ (the main objective in \cite{ioana} and \cite{zhengliu} is to analyze the almost periodicity of induced $n$-times integrated semigroups and cosine functions, see e.g. \cite[Theorem 1.1(ii)]{ioana}, which is very difficult to be satisfied in the case that $n\in {\mathbb N}$). The paper is consisted from two sections; in the first one, we collect some preliminaries concerning $C$-distribution semigroups and $C$-distribution cosine functions, integrated
$C$-semigroups and integrated $C$-cosine functions, as well as vector-valued almost periodic functions, while in the second one we formulate and prove our main structural results, proposing also some open problems we have not been able to solve.

We use the standard notation throughout the paper.
Unless specifed otherwise,
we shall always assume henceforth
that $(E,\| \cdot \|)$ is a complex Banach space. If $X$ and $Y$ are general vector topological spaces, then
$L(X,Y)$ designates the space of all continuous linear mappings from $X$ into
$Y;$ $L(E)\equiv L(E,E).$ If $A$ is a closed linear operator
acting on $E,$
then the domain, kernel space and range of $A$ will be denoted by
$D(A),$ $N(A)$ and $R(A),$
respectively. Since no confusion
seems likely, we will identify $A$ with its graph. The Banach space
$D(A)$ equipped with the graph norm $\|x\|_{[D(A)]}:=\|x\|+\|Ax\|,$ $x\in D(A)$ will be denoted by $[D(A)]$.
By $E^{\ast}$ we denote the dual space of $E;$ $C\in L(E)$ will be injective and the inclusion
$CA\subseteq AC$ will be assumed.
Set $g_{\zeta}(t):=t^{\zeta-1}/\Gamma(\zeta),$ $t>0$ ($\zeta>0$), where $\Gamma(\cdot)$ denotes the Gamma function.

Now we will recall the basic facts about vector-valued distribution spaces used henceforth.
The Schwartz spaces \index{space!Schwartz} of test functions $\mathcal{D}=C_0^{\infty}(\mathbb{R})$
and $\mathcal{E}=C^{\infty}(\mathbb{R})$
are equipped with the usual inductive limit topologies; the topology
of space of rapidly decreasing functions $\mathcal{S}$ defines the following system of seminorms
$
p_{m,n}(\psi):=\sup_{x\in\mathbb{R}}|x^m\psi^{(n)}(x)|,$ $
\psi\in\mathcal{S},\ m,\ n\in\mathbb{N}_0.
$
If $\emptyset \neq \Omega  \subseteq {\mathbb R},$ then by $\mathcal{D}_{\Omega}$ we denote the subspace of $\mathcal{D}$ consisting of those functions $\varphi \in \mathcal{D}$ for which supp$(\varphi) \subseteq \Omega;$ $\mathcal{D}_{0}\equiv \mathcal{D}_{[0,\infty)}.$
If $\varphi$, $\psi:\mathbb{R}\to\mathbb{C}$ are
measurable functions, the convolution products $\varphi*\psi$
and $\varphi*_0\psi$ are defined by
$$
\varphi*\psi(t):=\int\limits_{-\infty}^{\infty}\varphi(t-s)\psi(s)\,ds\mbox{ and }
\varphi*_0
\psi(t):=\int\limits^t_0\varphi(t-s)\psi(s)\,ds,\;t\;\in\mathbb{R}.
$$
If $\varphi\in\mathcal{D}$ and $f\in\mathcal{D}'$, or $\varphi\in\mathcal{E}$ and $f\in\mathcal{E}'$,
then we define the convolution $f*\varphi$ by $(f*\varphi)(t):=f(\varphi(t-\cdot))$, $t\in\mathbb{R}$.
For $f\in\mathcal{D}'$, or for $f\in\mathcal{E}'$,
define $\check{f}$ by $\check{f}(\varphi):=f(\varphi (-\cdot))$, $\varphi\in\mathcal{D}$ ($\varphi\in\mathcal{E}$).
In general, the convolution of two distributions $f$, $g\in\mathcal{D}'$, denoted by $f*g$,
is defined by $(f*g)(\varphi):=g(\check{f}*\varphi)$, $\varphi\in\mathcal{D}$.
It is well-known that $f*g\in\mathcal{D}'$ and supp$(f*g)\subseteq$supp$ (f)+$supp$(g)$. For every $t\in {\mathbb R},$ we define the Dirac distribution centered at point $t,$
$\delta_{t}$ for short, by $\delta_{t}(\varphi):=\varphi(t),$ $\varphi \in {\mathcal D}.$

The space
$\mathcal{D}'(E):=L(\mathcal{D},E)$ is consisted of all
continuous linear functions ${\mathcal D}\rightarrow E;$
$\mathcal{D}'_{\Omega}(E)$ denotes the subspace of
$\mathcal{D}'(E)$ containing $E$-valued distributions \index{vector-valued!distributions}
whose supports are contained in $\Omega .$ Set $\mathcal{D}'_{0}(E):= \mathcal{D}'_{[0,\infty)}(E).$
If $E={\mathbb C},$ then the above spaces are also denoted by $\mathcal{D}',$
$\mathcal{D}'_{\Omega}$ and
$\mathcal{D}_0'.$ For more details about vector-valued distributions, we refer the reader to L. Schwartz \cite{sch166}-\cite{sch167}; the convolution of
vector-valued distributions will be taken in the sense of
\cite[Proposition 1.1]{ku112}.

Now we recall the definition of a $C$-distribution semigroup (see \cite{knjigah}):

\begin{definition}\label{c-ds-frim}
Let $\mathcal{G}\in\mathcal{D}_0^{\prime}(L(E))$ satisfy $C\mathcal{G}=\mathcal{G}C$.
Then it is said that  $\mathcal{G}$ is a $C$-distribution semigroup, shortly \emph{(C-DS)}, iff ${\mathcal G}$ satisfies the following conditions:
\begin{itemize}
\item[(i)] ${\mathcal G}(\varphi\ast_0\psi)C={\mathcal G}(\varphi){\mathcal G}(\psi)$, for any $\varphi,\ \psi\in\ {\mathcal D}$.
\item[(ii)] ${\mathcal N}({\mathcal G}):=\bigcap_{\varphi\in\ {\mathcal D}_0}N({\mathcal G}(\varphi))=\{0\}$.\\
A \emph{(C-DS)} ${\mathcal G}$ is called dense iff, in addition to the above,
\item[(iii)] ${\mathcal R}(\mathcal{G}):=\bigcup_{\varphi\in\mathcal{D}_0}R(\mathcal{G}(\varphi))$
is dense in $E.$
\end{itemize}
\end{definition}

Let $\mathcal{G}\in\mathcal{D}_0'(L(E))$ be a (C-DS) and let $T\in\mathcal{E}_0',$
i.e., $T$ is a scalar-valued distribution with compact support contained in $[0,\infty)$.
Define
\[
G(T):=\Bigl\{(x,y) \in E\times E : \mathcal{G}(T*\varphi)x=\mathcal{G}(\varphi)y\;\mbox{ for all }\;\varphi\in\mathcal{D}_0 \Bigr\}.
\]
Then  it can be easily seen that $G(T)$ is a closed linear operator commuting with $C$.
We define the (infinitesimal) generator $A$ of a pre-(C-DS) $\mathcal{G}$ by $A:=G(-\delta').$ We know that $C^{-1}AC=A$ as well as that the following holds:
Let $S$, $T\in\mathcal{E}'_0$, $\varphi\in\mathcal{D}_0$,  $\psi\in\mathcal{D}$ and $x\in E$.
Then we have:
\begin{itemize}
\item[A1.] $G(S)G(T)\subseteq G(S*T)$ with $D(G(S)G(T))=D(G(S*T))\cap D(G(T))$, and $G(S)+G(T)\subseteq G(S+T)$.
\item[A2.] $(\mathcal{G}(\psi)x$, $\mathcal{G}(-\psi^{\prime})x-\psi(0)Cx)\in A$.
\end{itemize}

We denote by $D({\mathcal G})$ the set consisting of those elements $x\in E$ for which $x\in D(G({\delta}_t)),$ $t\geq 0$ and the mapping $t\mapsto G({\delta}_t)x,$ $t\geq 0$ is continuous. By A1., we have
that
\begin{align*}
D\bigl(G(\delta_s)G(\delta_t)\bigr)\!=\!D\bigl(G(\delta_s*\delta_t)\bigr)\cap D\bigl(G(\delta_t)\bigr)\!=\!D\bigl(G(\delta_{t+s})\bigr)\cap D\bigl(G(\delta_t)\bigr),
\;t,\,s\!\geq\! 0,
\end{align*}
which clearly implies $G(\delta_t)(D(\mathcal{G}))\subseteq D(\mathcal{G})$, $t\geq 0$.

We refer the reader to \cite{a43}, \cite{l1} and \cite{knjigah}-\cite{knjigaho} for further information concerning fractionally integrated $C$-semigroups and fractionally integrated $C$-cosine functions in Banach spaces, properties of their subgenerators and integral generators.

\begin{lemma}\label{miana=zers} (\cite{knjigah})
A closed linear operator $A$ is the generator of a \emph{(C-DS)} ${\mathcal G}$ iff
for every $\tau >0$ there exist an integer $n_{\tau}\in {\mathbb N}$
and a local $n_{\tau}$-times integrated $C$-semigroup
$(S_{n}(t))_{t\in \lbrack 0,\tau )}$ with the integral generator
$A.$  If this is the case, then the following equality holds:
$$
{\mathcal G}(\varphi)x=(-1)^{n}\int
\limits^{\tau}_{0}\varphi^{(n)}(t)S_{n}(t)x\, dt,\ x\in E,\ \varphi \in
{\mathcal D}_{(-\infty, \tau)}.
$$
\end{lemma}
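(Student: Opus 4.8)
The plan is to establish the equivalence in two directions, passing through the known local structure theory of integrated $C$-semigroups. For the ``only if'' direction, suppose $A$ generates a (C-DS) $\mathcal{G}$. First I would fix $\tau>0$ and use the standard result (see \cite{knjigah}) that, locally on $[0,\tau)$, any (C-DS) is ``regular enough'' to be represented by a sufficiently high-order integrated $C$-semigroup: the key point is that for each $\tau$ there is some $n=n_\tau\in\mathbb{N}$ such that the functional $\varphi\mapsto\mathcal{G}(\varphi)$ restricted to $\mathcal{D}_{(-\infty,\tau)}$ has order $\leq n$, hence can be written as $\mathcal{G}(\varphi)x=(-1)^n\int_0^\tau \varphi^{(n)}(t)S_n(t)x\,dt$ for a suitable operator family $(S_n(t))_{t\in[0,\tau)}$. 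One then verifies that this $(S_n(t))$ is strongly continuous, satisfies $S_n(0)=0$, and obeys the functional equation defining a local $n$-times integrated $C$-semigroup; this is exactly where the property $\mathcal{G}(\varphi\ast_0\psi)C=\mathcal{G}(\varphi)\mathcal{G}(\psi)$ from Definition~\ref{c-ds-frim}(i) is converted into the integrated-semigroup identity $S_n(t)S_n(s)=\bigl(\cdots\bigr)$ by integrating against test functions and using $g_\zeta$-convolutions. Finally, one checks that the integral generator of $(S_n(t))_{t\in[0,\tau)}$ coincides with $A=G(-\delta')$, using the characterization of the integral generator in terms of $A2$ and the injectivity condition $\mathcal{N}(\mathcal{G})=\{0\}$ from Definition~\ref{c-ds-frim}(ii).

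For the ``if'' direction, assume that for every $\tau>0$ we are given a local $n_\tau$-times integrated $C$-semigroup $(S_n(t))_{t\in[0,\tau)}$ with integral generator $A$. I would then \emph{define} $\mathcal{G}$ by the displayed formula $\mathcal{G}(\varphi)x:=(-1)^n\int_0^\tau\varphi^{(n)}(t)S_n(t)x\,dt$ for $\varphi\in\mathcal{D}_{(-\infty,\tau)}$, and first check consistency: if $\tau_1<\tau_2$ and both $n_{\tau_1}$ and $n_{\tau_2}$ are available, the two formulas agree on $\mathcal{D}_{(-\infty,\tau_1)}$ (this follows from the uniqueness/restriction properties of local integrated $C$-semigroups, and from the fact that integrating by parts lets one raise the integration order $n$ at the cost of replacing $S_n$ by $g_1\ast_0 S_n=S_{n+1}$). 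This yields a well-defined element $\mathcal{G}\in\mathcal{D}_0'(L(E))$. Then one verifies conditions (i) and (ii) of Definition~\ref{c-ds-frim}: condition (i) is the functional-equation computation run in reverse, and condition (ii) follows because $\mathcal{N}(\mathcal{G})=\{0\}$ is equivalent to $\bigcap_{t}N(S_n(t))=\{0\}$, which holds since $C$ is injective and $S_n$ has integral generator $A$ (so $S_n(t)x=0$ for all $t$ forces $Cx=0$ by differentiating $n$ times in the distributional sense). Lastly, identify the generator of this $\mathcal{G}$: show $G(-\delta')=A$ by unwinding the definition of $G(T)$ and matching it against the resolvent-type relation characterizing the integral generator of $(S_n(t))$.

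The commutation $C^{-1}AC=A$ and $C\mathcal{G}=\mathcal{G}C$ are inherited throughout from the standing assumption $CA\subseteq AC$ and the corresponding commutation for $S_n$, so these cause no difficulty. The main obstacle, I expect, is the bookkeeping in the ``only if'' direction: extracting the \emph{right} order $n_\tau$ and the operator family $(S_n(t))$ from the a priori abstract distribution $\mathcal{G}$, and then showing that the algebraic identity (i) — which lives at the level of test-function pairings — really does force $(S_n(t))_{t\in[0,\tau)}$ to satisfy the local $n$-times integrated $C$-semigroup functional equation with the correct polynomial correction terms $g_{n}\ast_0(\cdot)$. This is essentially a structure theorem for $L(E)$-valued distributions supported in $[0,\infty)$ combined with a careful convolution computation, and it is the technical heart of the argument; once that is in place, the identification of integral generators on both sides is routine and the displayed representation formula drops out immediately.
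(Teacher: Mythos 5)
First, a point of reference: the paper itself gives no proof of Lemma \ref{miana=zers}; it is quoted from \cite{knjigah} (where the argument rests on the structural theory of distribution semigroups, cf.\ also \cite{ku112}), and the route taken there is the same two-directional scheme you outline: local finite order of $\mathcal{G}$ and the structure theorem for $L(E)$-valued distributions in one direction, and the explicit formula together with consistency in $\tau$ and $n_{\tau}$ (raising the order by antidifferentiation) in the other. So your strategy is the expected one, not a genuinely different route; the issue is that, as written, the decisive steps are announced rather than carried out. The conversion of Definition \ref{c-ds-frim}(i) into the composition identity of a local $n$-times integrated $C$-semigroup with the correct $g_{n}$-type correction terms, and the converse computation, are exactly what you yourself call the technical heart and are left out; the same goes for the passage from the pointwise-in-$x$ representation supplied by the structure theorem to a single strongly continuous $L(E)$-valued family $(S_{n}(t))_{t\in[0,\tau)}$, which needs a uniform boundedness (closed graph) step. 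Without these, neither implication is established.

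Second, one specific claim would fail as stated: the asserted equivalence of $\mathcal{N}(\mathcal{G})=\{0\}$ with $\bigcap_{t\in[0,\tau)}N(S_{n}(t))=\{0\}$. If $\mathcal{G}(\varphi)x=0$ for all $\varphi\in\mathcal{D}_{0}$, the representation formula only tells you that the $n$-th distributional derivative of $S_{n}(\cdot)x$ vanishes on $(0,\tau)$, i.e.\ that $S_{n}(\cdot)x$ coincides there with an $E$-valued polynomial of degree at most $n-1$; it does not give $S_{n}(t)x=0$. The standard repair is to use the identity $A\int_{0}^{t}S_{n}(s)x\,ds=S_{n}(t)x-g_{n+1}(t)Cx$, valid for every $x\in E$ when $A$ is the integral generator, insert the polynomial, and compare coefficients (evaluating at finitely many points and inverting a Vandermonde system shows each coefficient lies in $D(A)$); together with $S_{n}(0)x=0$ this forces all polynomial coefficients to vanish and finally $Cx=0$, whence $x=0$ by injectivity of $C$. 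With that argument supplied, and the deferred convolution computations written out along the lines of \cite{knjigah}, your outline becomes a complete proof.
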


Let us recall that the solution space for a closed linear operator $A$, denoted by $Z(A)$,
is defined as the set of all $x\in E$ for which there exists a continuous mapping $u(\cdot,x)\in C([0,\infty):E)$ satisfying $\int^t_0 u(s,x)\,ds\in D(A)$ and $A\int^t_0 u(s,x)\,ds=u(t,x)-x$, $t\geq 0$. Throughout the paper, we will use the following important characterization of space $Z(A):$
Assume $A$ generates a (C-DS) $\mathcal{G}$.
Denote by $D(\mathcal{G})$ the set of all $x\in\bigcap_{t\geq 0}D(G(\delta_t))$
satisfying that the mapping $t\mapsto G(\delta_t)x$, $t\geq 0$ is continuous.
Then $Z(A)=D(\mathcal{G})$.
If $x\in Z(A)$, then $u(t,x)=G(\delta_t)x$, $t\geq 0$ and
$
{\mathcal G}(\psi)x=\int^{\infty}_0\psi(t)Cu(t,x)\,dt,
\ \psi\in\mathcal{D}_0.
$

We need to recall the assertion of \cite[Proposition 3.1.28(ii)]{knjigah} for later purposes.

\begin{lemma}\label{555}
Assume that, for every $\tau>0$, there exists $n_{\tau}\in\mathbb{N}$
such that $A$ is a subgenerator of a local $n_{\tau}$-times integrated $C$-semigroup $(S_{{n_{\tau}}}(t))_{t\in[0,\tau)}$.
Then the solution space $Z(A)$ is the space which consists exactly of those elements $x\in E$ such that, for every $\tau>0$,
$S_{n_{\tau}}(t)x\in R(C)$ and that the mapping $t\mapsto C^{-1}S_{{n_{\tau}}}(t)x$, $t\in[0,\tau)$
is $n_{\tau}$-times continuously differentiable; if this is the case, then we have $G(\delta_t)x= (d^{n_{\tau}}/dt^{n_{\tau}})C^{-1}S_{{n_{\tau}}}(t)x$, $t\in[0,\tau).$
\end{lemma}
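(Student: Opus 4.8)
The statement to prove is Lemma~\ref{555}, which identifies the solution space $Z(A)$ with the set of those $x\in E$ such that, for every $\tau>0$, $S_{n_\tau}(t)x\in R(C)$ for all $t\in[0,\tau)$ and $t\mapsto C^{-1}S_{n_\tau}(t)x$ is $n_\tau$-times continuously differentiable, and moreover in that case $G(\delta_t)x=(d^{n_\tau}/dt^{n_\tau})C^{-1}S_{n_\tau}(t)x$. My approach is to unwind the definition of $Z(A)$ through the local integrated $C$-semigroup structure, using the already-recorded fact (stated just before Lemma~\ref{555}) that $Z(A)=D(\mathcal G)$ and that for $x\in Z(A)$ one has $u(t,x)=G(\delta_t)x$. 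So it suffices to characterize membership in $D(\mathcal G)=\bigcap_{t\ge0}D(G(\delta_t))$ with continuous orbit, in terms of the functions $S_{n_\tau}(\cdot)x$.

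First I would fix $\tau>0$ and work with the local $n_\tau$-times integrated $C$-semigroup $(S_{n_\tau}(t))_{t\in[0,\tau)}$ having $A$ as subgenerator; by Lemma~\ref{miana=zers} the associated $(C\text{-}DS)$ acts by $\mathcal G(\varphi)x=(-1)^{n_\tau}\int_0^\tau \varphi^{(n_\tau)}(t)S_{n_\tau}(t)x\,dt$ for $\varphi\in\mathcal D_{(-\infty,\tau)}$. The key identity to establish is that $x\in D(G(\delta_t))$ for $t\in[0,\tau)$ with value $y=G(\delta_t)x$ is equivalent, via the convolution defining relation $\mathcal G(\delta_t*\varphi)x=\mathcal G(\varphi)y$ for all $\varphi\in\mathcal D_0$, to the requirement that $S_{n_\tau}(t)x\in R(C)$ and $C^{-1}S_{n_\tau}(t)x$ is $n_\tau$-times differentiable in $t$ with the $n_\tau$-th derivative equal to $y$. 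The forward direction: testing the defining relation against a mollifier sequence $\varphi=\rho_\eps$ concentrated near $0$ and against translates, and integrating by parts, forces $S_{n_\tau}(t)x$ to lie in $R(C)$ and the map $t\mapsto C^{-1}S_{n_\tau}(t)x$ to be a ``$C^{n_\tau}$ antiderivative-tower'' over $y$; the converse direction is a direct verification that, if the differentiability holds, plugging $(d^{n_\tau}/dt^{n_\tau})C^{-1}S_{n_\tau}(t)x$ back into the integral and integrating by parts $n_\tau$ times recovers $\mathcal G(\varphi)y=\mathcal G(\delta_t*\varphi)x$. The continuity of $t\mapsto G(\delta_t)x$ on $[0,\tau)$ is then automatic from $C^{n_\tau}$-regularity.

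Having done this for fixed $\tau$, I would then reconcile the different exponents $n_\tau$ as $\tau$ varies: the characterization must be shown consistent, i.e., the condition ``$S_{n_\tau}(t)x\in R(C)$ and $C^{-1}S_{n_\tau}(t)x$ is $n_\tau$-times continuously differentiable'' does not depend on the particular choice of $n_\tau$ on overlapping intervals --- this uses the standard compatibility relations between local integrated $C$-semigroups of different orders with the same subgenerator (each obtained from another by $g_1*_0$-convolution, so differentiability of order $n_\tau$ of one corresponds to differentiability of order $n_{\tau'}$ of the other on the common interval), together with the injectivity of $C$ to make $C^{-1}$ well-defined and to push $G(\delta_t)x$ through. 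Patching the local values $G(\delta_t)x$ for $t\in[0,\tau)$ over all $\tau>0$ yields a single continuous function on $[0,\infty)$, hence $x\in D(\mathcal G)=Z(A)$; conversely any $x\in Z(A)$ satisfies the stated regularity on each $[0,\tau)$ by the forward direction. Finally the formula $G(\delta_t)x=(d^{n_\tau}/dt^{n_\tau})C^{-1}S_{n_\tau}(t)x$ is exactly the value extracted in the local step.

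**Main obstacle.** The delicate point is the forward implication: extracting from the \emph{distributional} identity $\mathcal G(\delta_t*\varphi)x=\mathcal G(\varphi)y$, which a priori only holds after integrating against test functions, the \emph{pointwise} conclusions that $S_{n_\tau}(t)x\in R(C)$ for each individual $t$ and that $C^{-1}S_{n_\tau}(t)x$ is genuinely $C^{n_\tau}$ (not merely a distributional primitive). This requires careful use of approximate identities, the continuity of $S_{n_\tau}(\cdot)x$, and repeated integration by parts to transfer derivatives off the test function, all while keeping track of the boundary terms at $0$ (where $S_{n_\tau}(0)=0$) and controlling the action of the closed operator $C^{-1}$. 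Everything else --- the converse verification and the $\tau$-independence --- is comparatively mechanical given the cited structure theory of integrated $C$-semigroups in \cite{knjigah}.
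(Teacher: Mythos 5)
You cannot be compared against the paper here, because the paper does not prove Lemma~\ref{555} at all: it is quoted verbatim from \cite[Proposition 3.1.28(ii)]{knjigah}. Judged on its own terms, your outline has a genuine gap exactly at the step you flag as ``the delicate point'', namely the forward implication, and the mechanism you propose for it cannot work as described. Every $\varphi\in{\mathcal D}_0$ is smooth with support in $[0,\infty)$, hence vanishes at $0$ together with all of its derivatives; consequently, testing ${\mathcal G}(\delta_t\ast\varphi)x={\mathcal G}(\varphi)G(\delta_t)x$ or ${\mathcal G}(\psi)x=\int_0^\infty\psi(t)Cu(t,x)\,dt$ against mollifiers and their translates, and integrating by parts with the representation of Lemma~\ref{miana=zers}, only yields the distributional identity $(d^{n_\tau}/dt^{n_\tau})S_{n_\tau}(\cdot)x=Cu(\cdot,x)$ on $(0,\tau)$. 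This determines $S_{n_\tau}(\cdot)x$ only up to an $E$-valued polynomial of degree $\leq n_\tau-1$, and $S_{n_\tau}(0)x=0$ removes only the constant term; nothing in your sketch excludes a nonzero polynomial part, which would ruin both conclusions, since then $S_{n_\tau}(t)x$ need not belong to $R(C)$ and $C^{-1}S_{n_\tau}(t)x$ need not even be defined. So the ``mollifier plus integration by parts'' step is a missing idea, not a routine verification.

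To close the gap you must either exploit supports on the whole line (the formula of Lemma~\ref{miana=zers} holds for all $\varphi\in{\mathcal D}_{(-\infty,\tau)}$, and after extending ${\mathcal G}(\varphi)x=\int_0^\infty\varphi(t)Cu(t,x)\,dt$ from ${\mathcal D}_0$ to ${\mathcal D}$, both distributions vanish on $(-\infty,0)$, which forces the polynomial to vanish identically), or, much more directly -- and this is the standard argument behind \cite[Proposition 3.1.28]{knjigah} -- use uniqueness of mild solutions of the local integrated problem: for $x\in Z(A)$ put $w:=g_{n_\tau}\ast_0 u(\cdot,x)$; then $Cw(t)=g_{n_\tau+1}(t)Cx+A\int_0^t Cw(s)\,ds$ on $[0,\tau)$, so $Cw$ and $S_{n_\tau}(\cdot)x$ solve the same problem and coincide, giving $S_{n_\tau}(t)x=C\bigl(g_{n_\tau}\ast_0 u(\cdot,x)\bigr)(t)$, hence $R(C)$-membership, the $C^{n_\tau}$-regularity and $G(\delta_t)x=(d^{n_\tau}/dt^{n_\tau})C^{-1}S_{n_\tau}(t)x$ in one stroke. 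For the converse it is cleaner to avoid the distributional relation for $G(\delta_t)$ (which would require information about $S_{n_\tau}(\cdot)y$ for the putative value $y$): set $w(t):=C^{-1}S_{n_\tau}(t)x$ and apply difference quotients and closedness in the identity $Cw(t)=g_{n_\tau+1}(t)Cx+A\int_0^t Cw(s)\,ds$ to obtain inductively $w^{(k)}(0)=0$ for $k\leq n_\tau-1$ and $C^{-1}AC\int_0^t w^{(n_\tau)}(s)\,ds=w^{(n_\tau)}(t)-x$, i.e.\ that $w^{(n_\tau)}$ is a mild solution. Note also that you should be more careful about the distinction between a subgenerator $A$ and the integral generator $C^{-1}AC$ of ${\mathcal G}$, and that invoking $Z(A)=D({\mathcal G})$ carries a risk of circularity, since in \cite{knjigah} that characterization is obtained essentially through the present lemma.
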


For any $x\in Z(A)$, the function $u(\cdot,x)\in C([0,\infty):E)$ satisfying the above requirements is said to be a mild solution of the abstract Cauchy problem
\[(ACP)_{1}:\left\{
\begin{array}{l}
u\in C([0,\infty):[D(A)])\cap C^1([0,\infty):E),\\
u^{\prime}(t)=Au(t),\;t\geq  0,\\
u(0)=x.
\end{array}
\right.
\]

Let $\zeta \in {\mathcal D}_{[-2,-1]}$ be a fixed test
function satisfying $\int_{-\infty }^{\infty }\zeta (t)\, dt=1.$
Then, with $\zeta$ chosen in this way, we define $I(\varphi)$
($\varphi \in {\mathcal D}$) as follows
$$
I(\varphi)(\cdot):=\int\limits_{-\infty }^{\cdot}\Biggl[\varphi (t)-\zeta
(t)\int_{-\infty }^{\infty }\varphi (u)\, du\Biggr]\, dt.
$$
Then $I(\varphi)\in {\mathcal D},$ $I(\varphi^{\prime})=\varphi ,$
$\frac{d}{dt}I(\varphi)(t)=\varphi(t)-\zeta(t)\int_{-\infty
}^{\infty }\varphi (u)\, du,$ $t\in \mathbb{R}$ and, for every $G\in
{\mathcal D}^{\prime }(L(E)),$ the primitive $G^{-1}$ of $G$ is defined
by setting $G^{-1}(\varphi ):=-G(I(\varphi)),\ \varphi \in {\mathcal
D}.$ It is clear that $G^{-1}\in {\mathcal D}^{\prime }(L(E)),$
$(G^{-1})^{\prime }=G,$ i.e.,
$-G^{-1}(\varphi^{\prime})=G(I(\varphi^{\prime}))=G(\varphi),\
\varphi \in {\mathcal D}$ and that supp$(G)\subseteq [0, \infty)$ implies
supp$(G^{-1})\subseteq [0, \infty).$

Now we recall definition of a $C$-distribution cosine function (cf. \cite{knjigaho}-\cite{FKP} for more details):

\begin{definition}\label{mesut}
An element ${\mathbf G}\in {\mathcal
D}_{0}^{\prime }(L(E))$ is called a pre$-(C-DCF)$ iff ${\mathbf
G}(\varphi)C=C{\mathbf G}(\varphi),$ $\varphi \in {\mathcal D}$ and
\[
(C-DCF_{1}) : {\mathbf G}^{-1}(\varphi \ast _{0}\psi )C={\mathbf
G}^{-1}(\varphi ){\mathbf G}(\psi )+{\mathbf G}(\varphi ){\mathbf
G}^{-1}(\psi ),\ \varphi ,\ \psi \in {\mathcal D};
\]
if, additionally,
\[
(C-DCF_{2}):\qquad x=y=0\;\mbox{iff}\;\text{\ }{\mathbf G}(\varphi
)x+{\mathbf G}^{-1}(\varphi )y=0,\ \varphi \in {\mathcal D}_{0},
\]
then ${\mathbf G}$ is called a $C$-distribution cosine function, in
short $(C-DCF).$ A pre$-(C-DCF)$ ${\mathbf G}$ is called dense if
the set ${\mathcal R}({\mathbf G}):=\bigcup_{\varphi \in {\mathcal
D}_{0}}\hbox{R}({\mathbf G}(\varphi))$ is dense in $E.$
\end{definition}

Notice that $(DCF_{2})$ implies $\bigcap_{\varphi \in {\mathcal D}%
_{0}}N({\mathbf G}(\varphi ))=\{0\}$ and $\bigcap_{\varphi \in {\mathcal D}%
_{0}}N({\mathbf G}^{-1}(\varphi ))=\{0\},$ and that the
assumption ${\mathbf G}\in {\mathcal D}_{0}^{\prime }(L(E))$ implies
${\mathbf G}(\varphi )=0,$ $\varphi \in {\mathcal D} _{(-\infty , 0]}.$

The (integral) generator
$A$ of ${\mathbf G}$ is defined by
\[
A:=\Bigl\{(x,y)\in E \times E : {\mathbf
G}^{-1}\bigl(\varphi^{\prime \prime}\bigr)x={\mathbf G}^{-1}(\varphi )y \mbox{
for all }\varphi \in {\mathcal D}_{0} \Bigr\}.
\]
It is well known that $({\mathbf G}(\psi )x$, ${\mathbf G}(\psi ^{\prime\prime})x+\psi ^{\prime}(0)Cx)\in A$, $%
\psi \in {\mathcal D}$, $x\in E$ and $({\mathbf G}^{-1}(\psi )x, -{\mathbf G}(\psi^{\prime})x-\psi (0)Cx)\in A$, $\psi
\in {\mathcal D}$, $x\in E$.

Let us remind ourselves of the following fundamental results (\cite{knjigaho}):

\begin{lemma}\label{lemm}
\begin{itemize}
\item[(i)]
Let ${\mathbf G}\in {\mathcal D}_{0}^{\prime }(L(E))$ and ${\mathbf G}(\varphi)C=C{\mathbf G}(\varphi),$
$\varphi \in {\mathcal D}.$ Then ${\mathbf G}$ is a (C-DCF) in $E$
generated by $A$ iff ${\mathcal G}$ is a \emph{(${\mathcal C}$-DS)} in $E\oplus
E$ generated by ${\mathcal A},$ where ${\mathcal A}\equiv \left(
\begin{array}{cc}
0 & I \\
A & 0
\end{array}
\right) ,$ ${\mathcal C}\equiv \left(
\begin{array}{cc}
C & 0 \\
0 & C
\end{array}
\right) $ and ${\mathcal
G}\equiv \left(
\begin{array}{cc}
{\mathbf G} & {\mathbf G}^{-1} \\
{\mathbf G}^{\prime }-\delta \otimes C & {\mathbf G}
\end{array}
\right) .$
\item[(ii)] A closed linear operator $A$ is the generator of a (C-DCF) ${\mathbf G}$ iff
for every $\tau >0$ there exist an integer $n_{\tau}\in {\mathbb N}$
and a local $n_{\tau}$-times integrated $C$-cosine function
$(C_{n_{\tau}}(t))_{t\in \lbrack 0,\tau )}$ with the integral generator
$A.$  If this is the case, then the following equality holds:
$$
{\mathbf G}(\varphi)x=(-1)^{n}\int
\limits^{\tau}_{0}\varphi^{(n)}(t)C_{n_{\tau}}(t)x\, dt,\ x\in E,\ \varphi \in
{\mathcal D}_{(-\infty, \tau)}.
$$
\end{itemize}
\end{lemma}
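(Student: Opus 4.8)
The plan is to reduce both parts to already-established facts about $C$-distribution semigroups, using the second-order-to-first-order matrix trick. For part (i), the strategy is to verify directly that the block operator ${\mathcal G}$ satisfies the defining conditions of a $({\mathcal C}\text{-DS})$ on $E\oplus E$ whenever ${\mathbf G}$ is a $(C\text{-DCF})$ on $E$, and conversely. First I would record the primitive identities: since $({\mathcal G}^{-1})'={\mathcal G}$ componentwise, the primitive of the block ${\mathcal G}$ is the matrix with entries built from ${\mathbf G}^{-1}$, ${\mathbf G}^{-2}$ (a second primitive of ${\mathbf G}$), and ${\mathbf G}$ itself, where one uses ${\mathbf G}^{-1}=( {\mathbf G}'-\delta\otimes C)^{-2}$-type relations coming from $(C\text{-}DCF_1)$. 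Then the functional equation ${\mathcal G}(\varphi\ast_0\psi){\mathcal C}={\mathcal G}(\varphi){\mathcal G}(\psi)$ unpacks, entry by entry, into four scalar-type identities; the $(1,1)$ and $(2,2)$ entries reproduce exactly $(C\text{-}DCF_1)$ after applying $I(\cdot)$ or differentiating, while the off-diagonal entries follow by taking primitives/derivatives of the same identity. The kernel condition ${\mathcal N}({\mathcal G})=\{0\}$ is literally $(C\text{-}DCF_2)$: $(x,y)$ lies in $\bigcap_{\varphi\in{\mathcal D}_0}N({\mathcal G}(\varphi))$ precisely when ${\mathbf G}(\varphi)x+{\mathbf G}^{-1}(\varphi)y=0$ and $({\mathbf G}'-\delta\otimes C)(\varphi)x+{\mathbf G}(\varphi)y=0$ for all $\varphi\in{\mathcal D}_0$; the first of these already forces $x=y=0$ by $(C\text{-}DCF_2)$, and conversely $(C\text{-}DCF_2)$ is recovered from the first block equation alone. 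Commutation with ${\mathcal C}$ is immediate from ${\mathbf G}(\varphi)C=C{\mathbf G}(\varphi)$. Finally, to identify the generator: by definition $A$ is the generator of ${\mathbf G}$ iff ${\mathbf G}^{-1}(\varphi'')x={\mathbf G}^{-1}(\varphi)y$ for all $\varphi\in{\mathcal D}_0$, and one checks this is equivalent to $(x,y)\in E\oplus E$, viewed as $\bigl((x_1,x_2),(y_1,y_2)\bigr)$, satisfying the analogue ${\mathcal G}(-\delta')$-relation, which after the matrix computation says $y_1=x_2$ and $(x_1,y_2)\in A$, i.e. ${\mathcal A}=\left(\begin{smallmatrix}0&I\\A&0\end{smallmatrix}\right)$ — though I should use the paper's own array notation rather than smallmatrix. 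I would cite \cite{knjigaho} for the bookkeeping and only sketch the entry-wise translation.

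For part (ii), the plan is to chain part (i) with Lemma \ref{miana=zers} and the known correspondence between local integrated $C$-cosine functions for $A$ and local integrated ${\mathcal C}$-semigroups for ${\mathcal A}$. Concretely: $A$ generates a $(C\text{-DCF})$ ${\mathbf G}$ on $E$ $\iff$ (by (i)) ${\mathcal A}$ generates a $({\mathcal C}\text{-DS})$ ${\mathcal G}$ on $E\oplus E$ $\iff$ (by Lemma \ref{miana=zers}) for every $\tau>0$ there is $n_\tau\in{\mathbb N}$ and a local $n_\tau$-times integrated ${\mathcal C}$-semigroup on $[0,\tau)$ with integral generator ${\mathcal A}$. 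The remaining step is the standard fact (from \cite{knjigaho}, on the structure of subgenerators of integrated cosine functions) that ${\mathcal A}$ is the integral generator of a local $n$-times integrated ${\mathcal C}$-semigroup $(S_n(t))$ on $E\oplus E$ iff $A$ is the integral generator of a local $n$-times integrated $C$-cosine function $(C_n(t))$ on $E$, with $S_n(t)=\left(\begin{smallmatrix}C_n'(t)&\ast\\ \ast&C_n'(t)\end{smallmatrix}\right)$-type block form (again to be written in array notation); here one may need to pass between $n$ and $n+1$ integrations when relating a semigroup-type object to a cosine-type object, so the integer $n_\tau$ in the statement is the one furnished by this correspondence. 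The displayed formula ${\mathbf G}(\varphi)x=(-1)^n\int_0^\tau \varphi^{(n)}(t)C_{n_\tau}(t)x\,dt$ for $\varphi\in{\mathcal D}_{(-\infty,\tau)}$ then drops out by reading off the $(1,1)$-entry of the analogous formula for ${\mathcal G}$ supplied by Lemma \ref{miana=zers}, since the first component of ${\mathcal C}$-semigroup's integral kernel is $C_{n_\tau}(t)$.

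The main obstacle I anticipate is purely one of bookkeeping in part (i): correctly matching the four block entries of the functional equation against $(C\text{-}DCF_1)$ and its primitives, keeping track of the $\delta\otimes C$ term in the $(2,1)$ position and of the fact that $I(\varphi\ast_0\psi)\neq I(\varphi)\ast_0 I(\psi)$ in general, so one cannot naively "integrate" the semigroup identity twice — one has to use $(C\text{-}DCF_1)$ as the actual input and verify each block equation is a consequence of it (and vice versa). There is also a mild subtlety in part (ii) about whether the integer guaranteed by Lemma \ref{miana=zers} for ${\mathcal A}$ coincides with, or differs by one from, the integer appearing for the cosine function, which is resolved by the regularity relation $C_n'(t)$ vs. $C_n(t)$ between integrated cosine functions and the corresponding semigroup blocks; I would state this as a citation to \cite{knjigaho} rather than reprove it. Neither obstacle is conceptually deep — both parts are "known folklore" once one commits to the matrix reduction — so the write-up will emphasize the reduction and defer the entry-wise verification to the cited monograph.
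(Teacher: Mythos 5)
The paper never proves Lemma \ref{lemm}: it is recalled verbatim from \cite{knjigaho} (``Let us remind ourselves of the following fundamental results''), so there is no in-paper argument to compare yours against; what can be said is that your matrix-reduction plan is precisely the standard route used in that monograph, and its outline is sound. In (i), the block functional equation for ${\mathcal G}$ does unpack into consequences of $(C\text{-}DCF_{1})$ obtained by differentiating in $\varphi$ and $\psi$, with the $\delta\otimes C$ correction entering exactly where you put it; the kernel condition is indeed equivalent to $(C\text{-}DCF_{2})$, and your remark that the first block row suffices is correct because, for $\varphi\in{\mathcal D}_{0}$, replacing $\varphi$ by $-\varphi^{\prime}$ in ${\mathbf G}(\varphi)x+{\mathbf G}^{-1}(\varphi)y=0$ yields $-{\mathbf G}(\varphi^{\prime})x+{\mathbf G}(\varphi)y=0$ (note $\varphi(0)=0$ and ${\mathbf G}^{-1}(-\varphi^{\prime})={\mathbf G}(\varphi)$), which is the second row; and the generator computation does give ${\mathcal A}$. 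In (ii), chaining (i) with Lemma \ref{miana=zers} and the block correspondence between a local $n$-times integrated $C$-cosine function for $A$ and a local $(n+1)$-times integrated ${\mathcal C}$-semigroup for ${\mathcal A}$ is the intended argument, and the $n\leftrightarrow n+1$ shift you flag is harmless: one integration by parts turns $(-1)^{n+1}\int_{0}^{\tau}\varphi^{(n+1)}(t)C_{n+1}(t)x\,dt$ into $(-1)^{n}\int_{0}^{\tau}\varphi^{(n)}(t)C_{n}(t)x\,dt$, which recovers the displayed formula from the $(1,1)$ entry. Two small caveats: the parenthetical relation ``${\mathbf G}^{-1}=({\mathbf G}^{\prime}-\delta\otimes C)^{-2}$'' is neither needed nor literally correct with the paper's normalization of primitives (which inverts differentiation only up to a term involving $\zeta$), so it should be dropped; and since all decisive verifications (the four entry-wise identities and the cosine/semigroup block correspondence) are deferred to \cite{knjigaho}, what you have is a correct proof plan rather than a self-contained proof --- which, to be fair, is exactly the level of detail at which the paper itself treats this lemma.
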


A $(C-DCF)$ ${\mathbf G}$ is said to be an
exponential $C$-distribution cosine function, $(E-CDCF)$ in short,
iff ${\mathcal G}$ is an $(E-{\mathcal C}DS)$ in $E \oplus E.$ The above is equivalent to say that there exists $\omega \in {\mathbb {R}}$ such that
$ e^{-\omega t}{\mathbf G}^{-1}\in {\mathcal S}^{\prime }(L(E)).$ A (C-DS) ${\mathcal G}$ is said to be an
exponential $C$-distribution semigroup, (E-CDS) in short, iff there exists $\omega \in {\mathbb {R}}$ such that
$ e^{-\omega t}{\mathcal G}\in {\mathcal S}^{\prime }(L(E)).$ It is well known that $A$ is the generator of an $(E-CDCF)$ in
$E$ (an (E-CDS) in
$E$) iff there exists $n \in {\mathbb {N}}$ such that  $A$ is the integral generator of an exponentially bounded $n$-times integrated
$C$-cosine function  in
$E$ (an exponentially bounded $n$-times integrated
$C$-semigroup  in
$E$).

A function $u(\cdot;x,y)$ is said to be a mild solution of the
abstract Cauchy problem
\[(ACP)_{2}:\left\{
\begin{array}{l}
u\in C([0,\infty):[D(A)])\cap C^2([0,\infty):E),\\
u^{\prime \prime}(t)=Au(t),\;t\geq  0,\\
u(0)=x,\;u^{\prime}(0)=y
\end{array}
\right.
\]
iff the mapping the mapping
$t\mapsto u(t;x,y),\ t\geq 0$ is continuous, $\int
^{t}_{0}(t-s)u(s;x,y)ds\in D(A)$ and $A\int
^{t}_{0}(t-s)u(s;x,y)ds=u(t;x,y)-x-ty,$ $t\geq 0;$ in the sequel, we
primarily consider the mild solutions of $
(ACP)_{2}$ with $y=0.$ Denote by $Z_{2}(A)$ the set which consists of all $x\in
E$ for which there exists such a solution. Let $\pi_{1} : E\times E
\rightarrow E$ and $\pi_{2} : E\times E \rightarrow E$ be the
projections and let $G$ be a $(C-DCF)$ generated by $A.$ Then
${\mathcal G}$ is a (${\mathcal C}$-DS) generated by ${\mathcal A}$
and we define
$G(\delta_{t})x:=\pi_{2}({{\mathcal G}}(\delta
_{t})\binom{0}{x}),$ $t\geq 0,$ $x\in Z_{2}(A).$ Let us recall that, for every $x\in Z_{2}(A),$ one has
$G(\delta_{t})(Z_{2}(A))\subseteq Z_{2}(A),$ $t\geq 0,$ $
2G(\delta_{s})G(\delta_{t})x=G(\delta_{t+s})x+G(\delta_{|t-s|})x,$
$t,\ s\geq 0$ and ${\mathbf G}(\varphi)x=\int_{0}^{\infty}\varphi(t)CG(\delta_{t})x\, dt,$ $\varphi \in {{\mathcal D}_{0}}.$

\begin{lemma}\label{tadic} (\cite{knjigaho})
Assume that, for
every $\tau
>0,$ there exists $n_{\tau} \in {\mathbb N}$ such that $A$ is a
subgenerator of a local $n_{\tau}$-times integrated $C$-cosine
function $(C_{n_{\tau}}(t))_{t\in [0,\tau)}.$ Then the solution
space $Z_{2}(A)$ consists exactly of those vectors $x\in E$ such
that, for every $\tau
>0,$ $C_{n_{\tau}}(t)x\in \hbox{R}(C)$ and that the mapping
$t\mapsto C^{-1}C_{n_{\tau}}(t)x,$ $t\in [0,\tau)$ is
$n_{\tau}$-times continuously differentiable. If $x\in Z_{2}(A)$ and
$t\in [0,\tau),$ then
$G(\delta_t)x= (d^{n_{\tau}}/dt^{n_{\tau}})C^{-1}C_{n_{\tau}}(t)x.$
\end{lemma}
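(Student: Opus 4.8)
The plan is to reduce the assertion to its first-order counterpart, Lemma \ref{555}, by passing to the product space $E\oplus E$ via Lemma \ref{lemm}(i). Recall that $\mathcal{A}\equiv\left(\begin{array}{cc}0&I\\ A&0\end{array}\right)$ and $\mathcal{C}\equiv\left(\begin{array}{cc}C&0\\ 0&C\end{array}\right)$ act on $E\oplus E$ and that $A$ generates the (C-DCF) ${\mathbf G}$ iff $\mathcal{A}$ generates the (${\mathcal C}$-DS) ${\mathcal G}$. The first preparatory step is the identification
\[
x\in Z_{2}(A)\quad\Longleftrightarrow\quad\binom{0}{x}\in Z(\mathcal{A})=D(\mathcal{G}),
\]
together with $G(\delta_{t})x=\pi_{2}\bigl({\mathcal G}(\delta_{t})\binom{0}{x}\bigr)$: given a mild solution $u(\cdot;x,0)$ of $(ACP)_{2}$, the curve $t\mapsto\binom{\int_{0}^{t}u(s;x,0)\,ds}{u(t;x,0)}$ is a mild solution of $(ACP)_{1}$ for $\mathcal{A}$ starting at $\binom{0}{x}$ (one uses the closedness of $A$ and Fubini's theorem to pass between $\int_{0}^{t}\!\int_{0}^{s}$ and $\int_{0}^{t}(t-s)$), and conversely the second coordinate of any mild solution of $(ACP)_{1}$ for $\mathcal{A}$ starting at $\binom{0}{x}$ solves $(ACP)_{2}$ with $y=0$; uniqueness, which makes $Z_{2}(A)$ and $G(\delta_{t})x$ well defined, follows from $(C-DCF_{2})$, equivalently from Lemma \ref{lemm}.

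Next I would transport the hypothesis to $E\oplus E$. Fixing $\tau>0$ and the local $n_{\tau}$-times integrated $C$-cosine function $(C_{n_{\tau}}(t))_{t\in[0,\tau)}$ with integral generator $A$, the standard operator-matrix construction (see \cite{knjigah}-\cite{knjigaho}) shows that $\mathcal{A}$ is a subgenerator of the local $(n_{\tau}+1)$-times integrated ${\mathcal C}$-semigroup $(\mathcal{S}(t))_{t\in[0,\tau)}$ on $E\oplus E$ for which
\[
\mathcal{S}(t)\binom{0}{x}=\binom{(g_{2}\ast_{0}C_{n_{\tau}})(t)x}{(g_{1}\ast_{0}C_{n_{\tau}})(t)x},\qquad t\in[0,\tau),\ x\in E;
\]
the extra integration is forced in general because the axioms of an $n_{\tau}$-times integrated $C$-cosine function only yield $(g_{2}\ast_{0}C_{n_{\tau}})(t)x\in D(A)$. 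Lemma \ref{555}, applied with $(\mathcal{A},\mathcal{C})$ in place of $(A,C)$, then describes $Z(\mathcal{A})$ as the set of those $X$ for which, for every $\tau>0$, $\mathcal{S}(t)X\in R(\mathcal{C})$ and $t\mapsto\mathcal{C}^{-1}\mathcal{S}(t)X$ is $(n_{\tau}+1)$-times continuously differentiable on $[0,\tau)$, and gives ${\mathcal G}(\delta_{t})X=(d^{n_{\tau}+1}/dt^{n_{\tau}+1})\mathcal{C}^{-1}\mathcal{S}(t)X$ on this set.

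Finally I would specialize to $X=\binom{0}{x}$ and read off the second coordinate. Using the injectivity of $C$ and the closedness of $C^{-1}$, one checks that ``$(g_{1}\ast_{0}C_{n_{\tau}})(t)x\in R(C)$ for all $t\in[0,\tau)$ and $t\mapsto C^{-1}(g_{1}\ast_{0}C_{n_{\tau}})(t)x$ is $(n_{\tau}+1)$-times continuously differentiable'' is equivalent—upon differentiating once—to ``$C_{n_{\tau}}(t)x\in R(C)$ for all $t\in[0,\tau)$ and $t\mapsto C^{-1}C_{n_{\tau}}(t)x$ is $n_{\tau}$-times continuously differentiable'', and that then $(d^{n_{\tau}+1}/dt^{n_{\tau}+1})C^{-1}(g_{1}\ast_{0}C_{n_{\tau}})(t)x=(d^{n_{\tau}}/dt^{n_{\tau}})C^{-1}C_{n_{\tau}}(t)x$; the corresponding requirements on the first coordinate $(g_{2}\ast_{0}C_{n_{\tau}})(t)x$ are automatic, being one further antiderivative. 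Combining this with the first step yields $x\in Z_{2}(A)$ iff the conditions in the statement hold for every $\tau>0$, and $G(\delta_{t})x=\pi_{2}{\mathcal G}(\delta_{t})\binom{0}{x}=(d^{n_{\tau}}/dt^{n_{\tau}})C^{-1}C_{n_{\tau}}(t)x$, as asserted.

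The main obstacle is the bookkeeping around the number of integrations: pinning down the operator-matrix semigroup $(\mathcal{S}(t))$ and its index $n_{\tau}+1$ (rather than $n_{\tau}$), and verifying both implications of the equivalence in the last step—in particular that $C_{n_{\tau}}(t)x\in R(C)$ for \emph{every} $t$, not merely after an integration. A more self-contained but computationally heavier route bypasses $E\oplus E$ and imitates the proof of \cite[Proposition 3.1.28(ii)]{knjigah}: one applies $C^{-1}$ to the integrated-cosine functional equation $A(g_{2}\ast_{0}C_{n_{\tau}})(t)x=C_{n_{\tau}}(t)x-g_{n_{\tau}+1}(t)Cx$ (using $C^{-1}AC=A$) and differentiates it $n_{\tau}$ times—legitimate since $A$ is closed and the relevant antiderivatives are $(n_{\tau}+2)$-times continuously differentiable—to see that $t\mapsto(d^{n_{\tau}}/dt^{n_{\tau}})C^{-1}C_{n_{\tau}}(t)x$ is a mild solution of $(ACP)_{2}$ with $y=0$; conversely, for $x\in Z_{2}(A)$ with mild solution $u(\cdot;x,0)$ one identifies $C_{n_{\tau}}(t)x=C\int_{0}^{t}g_{n_{\tau}}(t-s)u(s;x,0)\,ds$ by the uniqueness of mild solutions of the integrated Cauchy problem, whence $t\mapsto C^{-1}C_{n_{\tau}}(t)x$ is $n_{\tau}$-times continuously differentiable with the stated derivative. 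In this route the delicate point is the vanishing of the boundary terms $\frac{d^{k}}{dt^{k}}\big|_{t=0}C^{-1}C_{n_{\tau}}(t)x$, $0\le k\le n_{\tau}-1$, produced by the repeated integration by parts.
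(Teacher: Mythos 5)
Your argument is essentially correct; note first that the paper itself gives no proof of this lemma --- it is quoted from \cite{knjigaho} --- so there is no in-paper argument to match, only the monograph's. Your primary route (reduction to Lemma \ref{555} through $\mathcal{A}$, $\mathcal{C}$ on $E\oplus E$) is sound: the identification $x\in Z_{2}(A)\Leftrightarrow\binom{0}{x}\in Z(\mathcal{A})$ together with $G(\delta_{t})x=\pi_{2}\bigl(\mathcal{G}(\delta_{t})\binom{0}{x}\bigr)$ is exactly the convention the paper fixes before the lemma; the operator matrix you invoke, whose action on $\binom{0}{x}$ is $\binom{(g_{2}\ast_{0}C_{n_{\tau}})(t)x}{(g_{1}\ast_{0}C_{n_{\tau}})(t)x}$, is indeed a local $(n_{\tau}+1)$-times integrated $\mathcal{C}$-semigroup with subgenerator $\mathcal{A}$ (the integrated counterpart of Lemma \ref{lemm}(i), available in \cite{knjigah}, \cite{knjigaho}); and your ``differentiate once'' equivalence is correct because $C$ is bounded and injective, so from $Cf(t)=\int_{0}^{t}C_{n_{\tau}}(s)x\,ds$ with $f\in C^{n_{\tau}+1}([0,\tau):E)$ one gets $C_{n_{\tau}}(t)x=Cf^{\prime}(t)\in R(C)$ for \emph{every} $t$ and $C^{-1}C_{n_{\tau}}(t)x=f^{\prime}(t)$, while the first-coordinate conditions are automatic, as you say. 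The source's own proof is closer to your second, direct route imitating \cite[Proposition 3.1.28(ii)]{knjigah}; there the boundary terms you flag do vanish: writing $f(t)=C^{-1}C_{n_{\tau}}(t)x$ and differentiating the relation $C^{-1}AC(g_{2}\ast_{0}f)(t)=f(t)-g_{n_{\tau}+1}(t)x$ $k$ times and evaluating at $t=0$ gives inductively $f^{(k)}(0)=0$ for $0\le k\le n_{\tau}-1$, which is what turns $(g_{2}\ast_{0}f^{(n_{\tau})})(t)$ into $f^{(n_{\tau}-2)}(t)$ and yields the mild-solution identity. The only point to handle with care in that direct route (a wrinkle inherited from the statement itself, which assumes $A$ is merely a subgenerator) is that one only has $A\subseteq C^{-1}AC$ in general, so the manipulations should be phrased through $C^{-1}AC$ (the integral generator, for which the paper's identity $C^{-1}AC=A$ is stated) rather than through the subgenerator equality you wrote; with that adjustment both of your routes give the lemma.
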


Finally, we give a brief overview of the basic properties of almost periodic functions with values in Banach spaces.
Let $I={\mathbb R}$ or $I=[0,\infty),$ and let $f : I \rightarrow E.$ Given $\epsilon>0,$ we call $\tau \in I$ an $\epsilon$-period for $f(\cdot)$ iff
\begin{align*}
\| f(t+\tau)-f(t) \| \leq \epsilon,\quad t\in I.
\end{align*}
The set constituted of all $\epsilon$-periods for $f(\cdot)$ is denoted by $\vartheta(f,\epsilon).$ It is said that $f(\cdot)$ is almost periodic, a.p. for short, iff for each $\epsilon>0$ the set $\vartheta(f,\epsilon)$ is relatively dense in $I,$ which means that
there exists $l>0$ such that any subinterval of $I$ of length $l$ meets $\vartheta(f,\epsilon).$ We call $f(\cdot)$
weakly almost periodic, w.a.p. for short, iff for each $x^{\ast}\in E^{\ast}$ the function $x^{\ast}(f(\cdot))$ is almost periodic.

By $AP(I:E)$ we denote the vector space consisting of all almost periodic functions from the interval $I$ into $E.$ Equipped with the sup-norm, $AP(I:E)$ becomes a Banach space.

The concept of almost periodicity was first studied by H. Bohr in 1925 and later generalized by many other mathematicians.
Almost periodic Banach space valued functions has been investigated in \cite{18} and \cite{30}; we can also recommend reading the monographs \cite{gaston}-\cite{gue} by G. M. N'Gu\' er\' ekata and \cite{diagana} by T. Diagana.

The most intriguing properties of almost periodic vector-valued functions are collected in the following theorem, stated here for the sake of clarity and better understanding of material which is to follow.

\begin{theorem}\label{svinja}
Let $f \in AP({\mathbb R} : E).$ Then the following holds:
\begin{itemize}
\item[(i)] $f(t)$ is bounded, i.e., $\sup_{t\in {\mathbb R}} \|f(t)\| <\infty ;$
\item[(ii)] if $g \in AP({\mathbb R} :E),$ $h \in AP({\mathbb R} :{\mathbb C}),$ then $f + g$ and $ hf \in AP({\mathbb R} :E);$
\item[(iii)] $P_{r}(f) := \lim_{t\rightarrow \infty}\frac{1}{t}\int^{t}_{0}e^{-irs}f(s)\, ds$ exists for all $r\in {\mathbb R}$ (Bohr's transform of $f(\cdot)$) and
$P_{r}(f) := \lim_{t\rightarrow \infty}\frac{1}{t}\int^{t+\alpha}_{\alpha}e^{-irs}f(s)\, ds$ for all $\alpha,\ r\in {\mathbb R};$
\item[(iv)] if $P_{r}(f) = 0$ for all $r \in {\mathbb R},$ then $f(t) = 0$ for all $t \in {\mathbb R};$
\item[(v)] $\sigma(f):=\{r\in {\mathbb R} :  P_{r}(f) \neq 0\}$ is at most countable;
\item[(vi)] if $c_{0} \nsubseteq E,$ which means that $E$ does not contain an isomorphic copy of $c_{0},$
and $g(t) =\int^{t}_{0}f(s)\, ds$ ($t \in {\mathbb R}$) is bounded, then $g \in AP({\mathbb R} :E);$
\item[(vii)] if $(g_{n})_{n\in {\mathbb N}}$ is a sequence in $AP({\mathbb R}:E)$ and $(g_{n})_{n\in {\mathbb N}}$ converges uniformly to $g$, then
$g \in AP({\mathbb R}:E);$
\item[(viii)] if $f^{\prime} \in BUC({\mathbb R}:E),$ then $f^{\prime} \in AP({\mathbb R} : E).$
\end{itemize}
\end{theorem}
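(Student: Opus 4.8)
The plan is to establish the eight properties in essentially the classical order, since each builds on the previous, and to reduce everything to the Bochner/Bohr approximation theorem for almost periodic functions, namely that $f \in AP(\mathbb{R}:E)$ if and only if the family of translates $\{f(\cdot + \tau) : \tau \in \mathbb{R}\}$ is relatively compact in $BUC(\mathbb{R}:E)$, equivalently iff $f$ is the uniform limit of trigonometric polynomials $\sum_{k=1}^{n} e^{i\lambda_k t} a_k$ with $a_k \in E$, $\lambda_k \in \mathbb{R}$. First I would prove (i): given $\epsilon = 1$, pick $l>0$ from relative density of $\vartheta(f,1)$; for any $t \in \mathbb{R}$ choose $\tau \in \vartheta(f,1) \cap [-t, -t+l]$ so that $t + \tau \in [0,l]$, whence $\|f(t)\| \le \|f(t+\tau)\| + 1 \le \sup_{s\in[0,l]}\|f(s)\| + 1 < \infty$ by continuity of $f$ on the compact $[0,l]$. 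For (ii), the estimate $\vartheta(f,\epsilon/2) \cap \vartheta(g,\epsilon/2) \subseteq \vartheta(f+g,\epsilon)$ together with the fact that the intersection of two relatively dense sets obtained from a common almost-period argument is relatively dense (this is the one genuinely nontrivial combinatorial lemma, sometimes phrased via simultaneous $\epsilon$-almost-periods) gives $f + g \in AP$; for $hf$ one uses boundedness of $h$ and $f$ from (i) and splits $h(t+\tau)f(t+\tau) - h(t)f(t)$ into $h(t+\tau)[f(t+\tau)-f(t)] + [h(t+\tau)-h(t)]f(t)$.

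Next I would treat (iii)--(v), the Bohr transform and the spectrum. For (iii), note $e^{-irt}f(t) \in AP(\mathbb{R}:E)$ by (ii), so it suffices to show $\lim_{t\to\infty} \frac{1}{t}\int_0^t g(s)\,ds$ exists for every $g \in AP(\mathbb{R}:E)$; the standard argument compares $\frac{1}{T}\int_0^T g$ with $\frac{1}{T}\int_0^T g(s+\tau)\,ds = \frac{1}{T}\int_\tau^{T+\tau} g$ and uses an $\epsilon$-period $\tau$ to show the averages form a Cauchy net, with the shift-invariance $\frac{1}{t}\int_\alpha^{t+\alpha} \to$ same limit following from boundedness of $g$ and a routine estimate of the boundary terms. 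Property (iv) is the uniqueness theorem: if all $P_r(f) = 0$ then by (iii) applied to $e^{-irt}f$ one shows the "mean square" $\lim_{T\to\infty}\frac{1}{T}\int_0^T \|f(s)\|^2\,ds$ or, more robustly, $\lim_{T\to\infty}\frac{1}{T}\int_0^T \langle x^*, f(s)\rangle \overline{\langle x^*, f(s)\rangle}\,ds$ vanishes for each $x^* \in E^*$ via Parseval applied to the scalar a.p.\ function $\langle x^*, f(\cdot)\rangle$, forcing $\langle x^*, f\rangle \equiv 0$ and hence $f \equiv 0$ by Hahn--Banach. Property (v) then follows because the scalar a.p.\ function $\langle x^*, f(\cdot)\rangle$ has at most countable spectrum (classical Bohr theory, via Bessel's inequality $\sum_r |P_r(\langle x^*, f\rangle)|^2 \le \lim \frac{1}{T}\int_0^T |\langle x^*,f\rangle|^2 < \infty$) and $\sigma(f) = \bigcup_{x^* \in D} \sigma(\langle x^*, f\rangle)$ for a countable norming set $D \subseteq E^*$ — here one should be slightly careful and instead argue directly with $E$-valued Bessel: $\sum_{r \in F} \|P_r(f)\|^2 \le \limsup_T \frac{1}{T}\int_0^T \|f(s)\|^2 ds$ for every finite $F$, which caps the number of $r$ with $\|P_r(f)\| > 1/n$.

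Finally (vi)--(viii). For (vii), a uniform limit of a.p.\ functions is a.p.: given $\epsilon$, pick $n$ with $\|g_n - g\|_\infty < \epsilon/3$, then $\vartheta(g_n, \epsilon/3) \subseteq \vartheta(g,\epsilon)$. For (viii), if $f' \in BUC(\mathbb{R}:E)$ then $f'$ is the uniform limit on $\mathbb{R}$ of the difference quotients $h^{-1}(f(\cdot+h) - f(\cdot))$ as $h \to 0^+$ (uniformity being exactly uniform continuity of $f'$), and each difference quotient is a.p.\ by (ii), so $f' \in AP(\mathbb{R}:E)$ by (vii). Property (vi) is the subtlest and is where I expect the main obstacle: it is Kadets' theorem, asserting that if $c_0 \not\hookrightarrow E$ and the antiderivative $g(t) = \int_0^t f(s)\,ds$ of $f \in AP(\mathbb{R}:E)$ is bounded, then $g \in AP(\mathbb{R}:E)$. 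The proof is not a formal manipulation: one shows $g$ is uniformly continuous (clear, since $g' = f$ is bounded by (i)), and then that relative compactness of the translates of $g$ fails only if $E$ contains $c_0$ — concretely, if $g$ were not a.p.\ one extracts from a non-Cauchy sequence of translates a subsequence whose "defect" vectors $g(\cdot + \tau_j) - g(\cdot + \tau_k)$ behave like the unit vector basis of $c_0$, invoking the Bessaga--Pełczyński $c_0$-embedding criterion (weakly unconditionally Cauchy series that do not converge). I would cite this from \cite{diagana} or the N'Gu\'er\'ekata monographs rather than reproving it, as it is standard but lengthy; the rest of the theorem is genuinely elementary and follows the blueprint above.
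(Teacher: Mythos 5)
Note first that the paper itself offers no proof of Theorem \ref{svinja}: it is stated as a compendium of classical facts on vector-valued almost periodic functions, to be read off from the standard references (\cite{18}, \cite{30}, \cite{gaston}, \cite{diagana}). So your outline is not to be measured against an argument in the text; on its own terms it follows the standard classical development, and your decision to quote Kadets' theorem for (vi) rather than reprove it is exactly in the paper's spirit. Items (i), (iii), (iv), (vii), (viii) are handled correctly (for (ii) you should simply invoke the Bochner criterion you announce at the start, since the ``simultaneous $\epsilon$-periods'' lemma you gesture at is precisely the nontrivial content there; with Bochner's characterization both $f+g$ and $hf$ are immediate).

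The genuine gap is in your treatment of (v). The ``$E$-valued Bessel inequality'' $\sum_{r\in F}\|P_{r}(f)\|^{2}\leq \limsup_{T\rightarrow\infty}\frac{1}{T}\int_{0}^{T}\|f(s)\|^{2}\,ds$, which you propose as the careful route, is false in a general Banach space: Bessel's inequality needs orthogonality, and already in $E=({\mathbb C}^{n},\|\cdot\|_{\infty})$ the function $f(t)=(e^{i\lambda_{1}t},\ldots,e^{i\lambda_{n}t})$ with distinct $\lambda_{k}$ satisfies $\|f(t)\|\equiv 1$ while $P_{\lambda_{k}}(f)=e_{k}$ has norm $1$ for every $k$, so the left-hand side is $n$ and the right-hand side is $1$. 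Your first suggestion (scalar reduction through a countable family of functionals) is the right one, but it needs an extra observation to survive nonseparable $E$: since $f$ is almost periodic its range is relatively compact, hence $f$, and every $P_{r}(f)$ (a limit of means of values $e^{-irs}f(s)$), takes values in a separable closed subspace $E_{0}\subseteq E$; choosing a countable total family $(x_{k}^{\ast})$ for $E_{0}$ gives $\sigma(f)\subseteq\bigcup_{k}\sigma(\langle x_{k}^{\ast},f\rangle)$, a countable union of countable sets by scalar Bohr theory. Alternatively, (v) follows at once from the approximation theorem: if trigonometric polynomials $f_{m}\rightarrow f$ uniformly, then $P_{r}(f)=\lim_{m\rightarrow\infty}P_{r}(f_{m})$ and $P_{r}(f_{m})=0$ unless $r$ is one of the finitely many exponents of $f_{m}$, so $\sigma(f)$ lies in a countable set. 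With (v) repaired in either of these ways, the rest of your outline is sound.
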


For the sequel, we need some preliminaries from the pioneering paper \cite{bart} by H. Bart and S. Goldberg. The translation semigroup $(W(t))_{t\geq 0}$ on $AP([0,\infty) : E),$ given by $[W(t)f](s):=f(t+s),$ $t\geq 0,$ $s\geq 0,$ $f\in AP([0,\infty) : E)$ is consisted solely of surjective isometries $W(t)$ ($t\geq 0$) and can be extended to a $C_{0}$-group $(W(t))_{t\in {\mathbb R}}$ of isometries on $AP([0,\infty) : E),$ where $W(-t):=W(t)^{-1}$ for $t>0.$ Furthermore, the mapping $F : AP([0,\infty) : E) \rightarrow AP({\mathbb R} : E),$ defined by
$$
[Ff](t):=[W(t)f](0),\quad t\in {\mathbb R},\ f\in AP([0,\infty) : E),
$$
is a linear surjective isometry and $Ff$ is the unique continuous almost periodic extension of a function $f$ from $AP([0,\infty) : E)$ to the whole real line. We have that $
[F(Bf)]=B(Ff)$ for all $B\in L(E)$ and $f\in  AP([0,\infty) : E).$

We refer the reader to the monograph \cite{hino-bor} by
Y. Hino, T. Naito, N. V. Minh and J. S. Shin for further information concerning
almost periodic solutions of abstract differential equations in Banach spaces.

\section{Formulation and proof of main results}

We start this section by introducing the following definition.

\begin{definition}\label{ceo-er}
Let ${\mathbf G}$ be a $(C-DCF)$ generated by $A,$ resp. let ${\mathcal G}$ be a \emph{(C-DS)} generated by $A$. Suppose that $\tilde{E}$
is a linear subspace of $Z_{2}(A),$ resp. $x\in Z(A).$
Then it is said that ${\mathbf G}$ is $\tilde{E}$-(weakly) almost periodic iff for each
$x\in \tilde{E}$ the mapping $t\mapsto G(\delta_{t})x,$
$t\geq 0$ is (weakly) almost periodic.
\end{definition}

\begin{remark}\label{ceo-er}
\begin{itemize}
\item[(i)]
It is clear that the above notions can be introduced for arbitrary operator family $(F(t))_{t\geq 0}$ consisted of possibly non-linear and possibly non-continuous single valued operators.
\item[(ii)] Suppose that ${\mathbf G}$ is a $(C-DCF)$ generated by $A,$ resp. ${\mathcal G}$ is a \emph{(C-DS)} generated by $A$. Let $\tilde{E}$
be a linear subspace of $Z_{2}(A),$ resp. $x\in Z(A),$ such that
${\mathbf G},$ resp. ${\mathcal G},$ is $\tilde{E}$-(weakly) almost periodic. Let ${\mathbf G}_{1}$ be a $(C_{1}-DCF)$ generated by $A,$ resp. let ${\mathcal G}_{1}$ be a \emph{(C$_{1}$-DS)} generated by $A$.
Then ${\mathbf G}_{1},$ resp. ${\mathcal G}_{1},$ is $\tilde{E}$-(weakly) almost periodic.
\end{itemize}
\end{remark}

Let $A$ be a closed linear operator. Designate by
$D$ ($H$) the set consisting of all eigenvectors of operator $A$ which correspond to purely imaginary eigenvalues of operator $A$ (to non-positive real eigenvalues of operator $A$); we assume henceforth that
the set $D_{0}$ ($H_{0}$) is consisted of all eigenvectors of operator $A$ which correspond to purely imaginary non-zero eigenvalues of operator $A$ (to negative real eigenvalues of operator $A$).

Motivated by our considerations from the point 10. of \cite{aot}, we state and prove the following result.

\begin{proposition}\label{spremte-se}
\begin{itemize}
\item[(i)] Suppose that ${\mathcal G}$ is a \emph{(C-DS)} generated by $A.$ Then $\tilde{E}:=span(D) \subseteq Z(A)$ and
${\mathcal G}$ is $\tilde{E}$-almost periodic. Furthermore,
the mapping $t\mapsto \int^{t}_{0}G(\delta_{s})x\, ds,$ $t\geq 0$ is almost periodic for all
$x\in span(D_{0}).$
\item[(ii)] Suppose that ${\mathbf G}$ is a $(C-DCF)$ generated by $A.$ Then $\tilde{E}:=span(H) \subseteq Z_{2}(A)$ and
${\mathbf G}$ is $\tilde{E}$-almost periodic. Furthermore,
the mappings $t\mapsto \int^{t}_{0}G(\delta_{s})x\, ds,$ $t\geq 0$ and $t\mapsto \int^{t}_{0}(t-s)G(\delta_{s})x\, ds,$ $t\geq 0$ are almost periodic for all
$x\in span(H_{0}).$
\end{itemize}
\end{proposition}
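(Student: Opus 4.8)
The plan is to reduce everything to a single computation: if $x$ is an eigenvector of $A$ with eigenvalue $\lambda$, then $x\in Z(A)$ (resp. $x \in Z_2(A)$) and the mild solution is the explicit exponential (resp. cosine/exponential) function $e^{\lambda t}x$ (resp. $\cos(\sqrt{-\lambda}\,t)x$ for the second-order problem), whence $G(\delta_t)x$ is given by that formula. Once this is established, part (i) follows because for $\lambda = i\alpha$ purely imaginary the function $t\mapsto e^{i\alpha t}x$ is (trivially) almost periodic on $[0,\infty)$, and finite linear combinations of almost periodic functions are almost periodic by Theorem~\ref{svinja}(ii); linearity of $x\mapsto G(\delta_t)x$ on $Z(A)$ (which is $D(\mathcal G)$, a linear space) then gives $\tilde E = \mathrm{span}(D)\subseteq Z(A)$ with $\mathcal G$ being $\tilde E$-almost periodic. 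For part (ii) one uses Lemma~\ref{lemm}(i) to pass to the associated $(\mathcal C\text{-DS})$ on $E\oplus E$ generated by $\mathcal A = \left(\begin{smallmatrix}0 & I\\ A & 0\end{smallmatrix}\right)$: an eigenvector $x$ of $A$ with eigenvalue $\lambda\le 0$ lifts to eigenvectors $\binom{x}{\pm\sqrt{\lambda}\,x}$ of $\mathcal A$ with eigenvalues $\pm\sqrt{\lambda}$, which are again purely imaginary; averaging the two corresponding solutions of $(ACP)_1$ in $E\oplus E$ and projecting onto the second coordinate via $\pi_2$ recovers $G(\delta_t)x = \cos(\sqrt{-\lambda}\,t)x$, an almost periodic function. (The case $\lambda = 0$ must be handled separately, giving $G(\delta_t)x = x$, which is constant, hence almost periodic.)

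To justify that an eigenvector lies in the solution space and identifies the claimed mild solution, I would verify the defining integral identity directly. For $(ACP)_1$: if $Ax = \lambda x$, set $u(t) = e^{\lambda t}x$; then $\int_0^t u(s)\,ds = \lambda^{-1}(e^{\lambda t}-1)x \in D(A)$ when $\lambda\neq 0$ and $A\int_0^t u(s)\,ds = (e^{\lambda t}-1)x = u(t)-x$, so $x\in Z(A)$ and $u(\cdot,x) = u(\cdot)$ by uniqueness (the characterization $Z(A) = D(\mathcal G)$ with $u(t,x) = G(\delta_t)x$ is quoted in the excerpt). Alternatively, and perhaps more cleanly, one can use A2., namely $(\mathcal G(\psi)x, \mathcal G(-\psi')x - \psi(0)Cx)\in A$, together with the representation $\mathcal G(\psi)x = \int_0^\infty \psi(t)Cu(t,x)\,dt$, to pin down $u(\cdot,x)$; but the direct ODE verification is shortest. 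For the statements about the primitives: if $x\in\mathrm{span}(D_0)$, i.e. $x$ is a combination of eigenvectors with \emph{nonzero} purely imaginary eigenvalues, then $\int_0^t G(\delta_s)x\,ds$ is again a finite combination of functions of the form $(i\alpha)^{-1}(e^{i\alpha t}-1)x$ with $\alpha\neq 0$, each of which is bounded and almost periodic; likewise the double primitive $\int_0^t (t-s)G(\delta_s)x\,ds$ in part (ii) evaluates, for $x$ an eigenvector with negative eigenvalue $\lambda = -\beta^2$, to $\beta^{-2}(1-\cos\beta t)x$ up to a harmless constant, which is almost periodic. The exclusion of the zero eigenvalue in $D_0$, $H_0$ is exactly what prevents a secular (linearly growing in $t$) term in these primitives.

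The main obstacle — really the only point requiring care rather than bookkeeping — is the passage in (ii) between the second-order solution space $Z_2(A)$ and the first-order picture on $E\oplus E$, and making sure the map $x\mapsto G(\delta_t)x$ used in Definition~\ref{ceo-er} is the one defined via $G(\delta_t)x = \pi_2(\mathcal G(\delta_t)\binom 0x)$. One must check that $\binom 0x$ (not $\binom x0$) lies in $Z(\mathcal A)$ when $x\in\mathrm{span}(H)$ and compute $\pi_2$ of the corresponding solution; concretely $\binom 0x = \tfrac{1}{2\sqrt\lambda}\big(\binom{x}{\sqrt\lambda x} - \binom{x}{-\sqrt\lambda x}\big)$ when $\lambda\neq 0$, so the $E\oplus E$-solution is $\tfrac{1}{2\sqrt\lambda}\big(e^{\sqrt\lambda t}\binom{x}{\sqrt\lambda x} - e^{-\sqrt\lambda t}\binom{x}{-\sqrt\lambda x}\big)$, whose second coordinate is $\tfrac12(e^{\sqrt\lambda t} + e^{-\sqrt\lambda t})x = \cos(\sqrt{-\lambda}\,t)x$ since $\lambda < 0$. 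Everything else is an application of Theorem~\ref{svinja}(ii) and the linearity of the solution-space constructions; no spectral theory or approximation argument is needed, which is precisely the point of working on the eigenvector subspace.
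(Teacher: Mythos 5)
Your proposal is correct, but it proceeds by a genuinely different mechanism than the paper. The paper stays entirely inside the integrated-semigroup picture: for (i) it invokes Lemma~\ref{miana=zers} to get a local $n_\tau$-times integrated $C$-semigroup $(S_n(t))$, derives from $irx=Ax$ the identity $S_n(t)x-g_{n+1}(t)Cx=ir\int_0^t S_n(s)x\,ds$, differentiates to get $(d^n/dt^n)S_n(t)x=e^{irt}Cx$, and then reads off $G(\delta_t)x=e^{irt}x$ and the membership $x\in Z(A)$ from Lemma~\ref{555}; part (ii) is handled analogously via Lemma~\ref{lemm}(ii) and Lemma~\ref{tadic} with $G(\delta_t)x=\cos(rt)x$, not via the reduction to $E\oplus E$. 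You instead verify directly that $e^{i\alpha t}x$ (resp.\ $\cos(\beta t)x$) satisfies the defining mild-solution identity and then appeal to the quoted characterization $Z(A)=D(\mathcal G)$, $u(t,x)=G(\delta_t)x$ -- note this step silently uses uniqueness of mild solutions, which is indeed implicit in that characterization since $A$ generates a (C-DS) -- and for (ii) you pass through Lemma~\ref{lemm}(i) and eigenvectors $\binom{x}{\pm\sqrt\lambda x}$ of $\mathcal A$. Your route is more elementary (no differentiation of $S_n(\cdot)x$, no finite-order machinery) and is in fact closer to the paper's own remark after the proposition that these equalities persist in locally convex spaces; the paper's route has the advantage of producing $x\in Z(A)$ and the formula for $G(\delta_t)x$ simultaneously from Lemma~\ref{555} without invoking uniqueness separately. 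Two small points you should make explicit: in (ii), Definition~\ref{ceo-er} requires $\mathrm{span}(H)\subseteq Z_2(A)$, so either verify the second-order identity $A\int_0^t(t-s)\cos(\beta s)x\,ds=\cos(\beta t)x-x$ directly (which you essentially do in the ``Furthermore'' discussion), or note that $\binom{0}{x}\in Z(\mathcal A)$ yields $x\in Z_2(A)$ by projecting the first-order mild-solution identity onto its coordinates; and the $\lambda=0$ eigenvector in (ii) corresponds to a generalized eigenvector $\binom{0}{x}$ of $\mathcal A$ rather than a combination of eigenvectors, so there the direct verification $u(t)\equiv x$ is the cleaner argument, as you indicate.
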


\begin{proof}
We will prove only (i) and outline some basic facts needed for the proof of (ii).
By  Lemma \ref{miana=zers}, we know that, for every $\tau >0,$ there exist an integer $n=n_{\tau}\in {\mathbb N}$
and a local $n$-times integrated $C$-semigroup
$(S_{n}(t))_{t\in [ 0,\tau )}$ with the integral generator
$A.$
Suppose that $r\in {\mathbb R}$
and $irx= Ax.$ Then $S_{n}(t)x-g_{n+1}(t)Cx=ir\int^{t}_{0}S_{n}(s)x\, ds,$ $t\in [ 0,\tau ),$ which simply implies that the mapping $t\mapsto S_{n}(t)x,$ $t\in [ 0,\tau )$
is infinitely differentiable with all derivatives at zero of order less than or equal to $n-1$ being zeroes and that $(d^{n}/dt^{n})S_{n}(t)x=e^{irt}Cx,$ $t\in [ 0,\tau ).$ Hence, $S_{n}(t)x=(g_{n}\ast_{0}e^{ir\cdot})Cx,$ $t\in [ 0,\tau ),$
$G(\delta_{t})x=e^{irt}x,$ $t\in [ 0,\tau )$ and the last equality clearly continues to hold for all non-negative reals $t.$
Now the final conclusions follow from Lemma \ref{555}.
The proof of part (ii) is quite similar, and can be deduced by using Lemma \ref{lemm}(ii), Lemma \ref{tadic}
and
the equality $G(\delta_{t})x=\cos(rt)x,$ $t\geq 0,$ provided that $-r^{2}x=Ax$ for some $r\in {\mathbb R}.$
\end{proof}

With the help of Proposition \ref{spremte-se}, we can simply construct a great number of non-exponential $C$-distribution semigroups ($C$-distribution cosine functions) that are $\tilde{E}$-almost periodic, with the subspace $\tilde{E}$ being dense in $E$ (see e.g. the extension of Desch-Schappacher-Webb criterion for chaos of strongly continuous semigroups \cite[Theorem 3.1.36]{knjigah}, \cite[Theorem 2.2.10]{knjigaho} and \cite[Example 3.1.41]{knjigah}). As mentioned in the introductory part, our recent research studies of hypercyclic and topologically mixing properties of strongly continuous semigroups and cosine functions (see \cite[Chapter III]{knjigaho} for a comprehensive survey of results) enable one to construct
many other examples of subspace almost periodic strongly continuous semigroups and cosine functions that are not almost periodic in the usual sense (\cite{aot}).

We want also to mention that Proposition \ref{spremte-se} continues to hold for $C$-distribution semigroups and $C$-distribution cosine functions in locally convex spaces, which follows from the fact that the equalities
$G(\delta_{t})x=e^{irt}x,$ $t\geq 0$ and $G(\delta_{t})x=\cos(rt)x,$ $t\geq 0$ used above can be proved without assuming that the vector-valued distributions ${\mathcal G}$ and ${\mathbf G}$ are of finite order
(cf. \cite{FKP} for the notion and \cite[Lemma 2.8]{chaotic} for semigroup case; for cosine operator case, combine the above result with Lemma \ref{lemm}(i), and \cite[Lemma 3.2.33]{knjigaho} with $\lambda=\pm ir$).

Suppose that ${\mathcal G}$ is a (C-DS) generated by $A,$ and ${\mathcal G}$ is $\tilde{E}$-almost periodic.
Define $T(t)x:=T_{x}(t):=G(\delta_{t})x,$
$t\geq 0,$ $x\in Z(A)$
and
$S(t)x:=[F(T_{x}(\cdot))](t),$ $t\in {\mathbb R},$ $x\in \tilde{E}.$ Since $F$ is a linear surjective isometry between the spaces $AP([0,\infty) : E)$ and $AP({\mathbb R} : E),$ we have that
$$
\|S(t)x\|\leq \sup_{s\in {\mathbb R}}\|S(s)x\|=\sup_{s\geq 0}\|S(s)x\|=\sup_{s\geq 0}\|T(s)x\|,\quad x\in \tilde{E},\ t<0,
$$
and therefore,
$$
\sup_{t\in {\mathbb R}}\|S(t)x\|=\sup_{t\geq 0}\|S(t)x\|,\quad x\in \tilde{E}.
$$
Furthermore, it can be easily seen that $S(\cdot)$ commutes with $C.$

\begin{proposition}\label{polugrupe}
We have the following:
\begin{itemize}
\item[(i)] $T(t)T(s)x=T(t+s)x$
for all $t\geq 0,$ $ s\geq 0$ and $ x\in Z(A).$
\item[(ii)] Suppose that $t\geq 0,$ $ s\leq 0,$ $ x\in \tilde{E}$ and $ G(\delta_{t})x\in \tilde{E}.$ Then
$S(s)S(t)x=S(t+s)x.$
\item[(iii)] Suppose that $t\geq 0,$ $ s\leq 0$ and $ x\in \tilde{E}.$ Then $S(s)x\in Z(A)
$ and $T(t)S(s)x=S(t+s)x.
$
\item[(iv)] Suppose that $t\geq 0,$ $ s\leq 0,$ $ x\in \tilde{E},$ $ G(\delta_{r})x\in \tilde{E}$ for all $r\geq  0$ and $\tilde{E}$ is closed. Then
$S(s)x\in \tilde{E}
$ and $S(t)S(s)x=S(t+s)x.
$
\item[(v)] Suppose that $t\leq 0,$ $ s\leq 0,$ $ x\in \tilde{E},$ $ G(\delta_{r})x\in \tilde{E}$ for all $r\geq  0$ and $\tilde{E}$ is closed. Then
$S(t)S(s)x=S(t+s)x.
$
\end{itemize}
\end{proposition}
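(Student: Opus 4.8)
The plan rests on two elementary observations. First, for $t\ge 0$ and $x\in\tilde E$ one has $S(t)x=[F(T_x)](t)=[W(t)T_x](0)=T_x(t)=G(\delta_t)x=T(t)x$, so $S$ restricts to $T$ on $[0,\infty)$. Second, if $f\in AP([0,\infty):E)$, $\sigma\in\RR$, and $g\in AP([0,\infty):E)$ satisfies $g(r)=[Ff](r+\sigma)$ for all $r\ge 0$, then $v\mapsto[Ff](v+\sigma)$ is a continuous almost periodic function on $\RR$ that restricts to $g$, so by uniqueness of the almost periodic extension $[Fg](v)=[Ff](v+\sigma)$ for all $v\in\RR$. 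Part (i) is then just the semigroup law for $(G(\delta_t))_{t\ge 0}$ on $Z(A)=D(\mathcal G)$: since $x\in D(\mathcal G)$ forces $G(\delta_t)x\in D(\mathcal G)$, property A1.\ gives $G(\delta_s)G(\delta_t)x=G(\delta_{s+t})x$. For part (ii), the hypothesis $G(\delta_t)x\in\tilde E$ makes $T_{G(\delta_t)x}$ an almost periodic function, and by (i) it coincides with $r\mapsto G(\delta_{r+t})x=S(r+t)x$ on $[0,\infty)$; the second observation (with $f=T_x$, $\sigma=t$, $g=T_{G(\delta_t)x}$) then gives $S(s)S(t)x=S(s)G(\delta_t)x=[F(T_{G(\delta_t)x})](s)=S(s+t)x$.

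The core is part (iii). Fix $x\in\tilde E$ and $s\le 0$, write $g:=T_x\in AP([0,\infty):E)$ and $Fg=S(\cdot)x\in AP(\RR:E)$. For each $k\in\NN$, relative density of $\vartheta(g,1/k)$ provides $\tau_k\in\vartheta(g,1/k)$ with $\tau_k\ge|s|+k$; since each $W(v)$ is an isometry, $[Fg](v+\tau_k)-[Fg](v)=[W(v)(W(\tau_k)g-g)](0)$ has norm at most $1/k$, so $\tau_k\in\vartheta(Fg,1/k)$ as well. Put $r_k:=\tau_k+s\ge 0$. Then $G(\delta_{r_k})x=S(r_k)x=[Fg](\tau_k+s)$, and for every fixed $r\ge 0$ part (i) gives $G(\delta_r)\bigl(G(\delta_{r_k})x\bigr)=G(\delta_{r+r_k})x=S(r+r_k)x=[Fg](r+s+\tau_k)$; evaluating the estimate $\tau_k\in\vartheta(Fg,1/k)$ at $v=s$ and at $v=r+s$ yields $G(\delta_{r_k})x\to S(s)x$ and $G(\delta_r)\bigl(G(\delta_{r_k})x\bigr)\to S(r+s)x$ as $k\to\infty$. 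Since $G(\delta_r)$ is a closed operator, $S(s)x\in D(G(\delta_r))$ with $G(\delta_r)S(s)x=S(r+s)x$; as $r\mapsto S(r+s)x$ is continuous, the identification $Z(A)=D(\mathcal G)$ gives $S(s)x\in Z(A)$, and the case $r=t$ is precisely $T(t)S(s)x=S(t+s)x$.

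Parts (iv) and (v) follow quickly. In (iv), the vectors $G(\delta_{r_k})x$ occurring in the proof of (iii) lie in $\tilde E$ by hypothesis, and $\tilde E$ is closed, so $S(s)x=\lim_k G(\delta_{r_k})x\in\tilde E$; then $S(t)S(s)x=G(\delta_t)S(s)x=S(t+s)x$ for $t\ge 0$ by (iii) and the first observation. In (v), (iv) gives $y:=S(s)x\in\tilde E$ and (iii) gives $y\in Z(A)$ with $T_y(r)=G(\delta_r)y=S(r+s)x=[F(T_x)](r+s)$ for all $r\ge 0$; the second observation (with $f=T_x$, $\sigma=s$, $g=T_y$) forces $[F(T_y)](t)=[F(T_x)](t+s)=S(t+s)x$ for every $t\in\RR$, i.e.\ $S(t)S(s)x=S(t)y=S(t+s)x$, in particular for $t\le 0$.

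I expect part (iii) to be the only genuinely delicate point: $S(s)x$ for $s<0$ is a value of the almost periodic \emph{extension} of $t\mapsto G(\delta_t)x$, not manifestly of the form $G(\delta_{\bullet})z$, so it must be recovered --- together with all the values $G(\delta_r)S(s)x$ --- as limits of honest semigroup values $G(\delta_{r_k})x$ with $r_k\ge 0$. Relative density of $\eps$-periods together with the fact that the isometry $F$ preserves $\eps$-periods is exactly what furnishes such $r_k$; once that is available, closedness of $G(\delta_r)$ finishes part (iii), and parts (iv)--(v) add only bookkeeping with $\tilde E$ and with uniqueness of the almost periodic extension.
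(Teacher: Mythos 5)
Your proposal is correct, and on the decisive part (iii) it takes a genuinely different route from the paper. The paper proves (iii) by verifying directly that $r\mapsto S(r+s)x$ is a mild solution with initial value $S(s)x$: through the distributional definition of the generator this is reduced to the identity ${\mathcal G}(\varphi)S(s)x-{\mathcal G}(\varphi)x={\mathcal G}(\varphi^{\prime})\int^{0}_{s}S(r)x\,dr$ (formula \eqref{steva-prim}), which is then obtained by showing that the pair $({\mathcal G}(\varphi)T_{x}(\cdot),{\mathcal G}(-\varphi^{\prime})T_{x}(\cdot))$ lies in the graph of the generator of the translation group $(W(t))_{t\geq 0}$ on $AP([0,\infty):E)$, differentiating $v\mapsto {\mathcal G}(\varphi)S(v)x$ as in \eqref{jankovici} and applying the Newton--Leibniz formula, with a separate (easier) treatment of the case $t\leq -s$. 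You instead recover $S(s)x$, together with all values $G(\delta_{r})S(s)x$, as limits of honest orbit values $G(\delta_{r_{k}})x$ and $G(\delta_{r})G(\delta_{r_{k}})x$ along a sequence of common $\epsilon$-periods $\tau_{k}\geq |s|$ (using that the isometries $W(v)$, equivalently $F$, preserve $\epsilon$-periods), and then conclude via closedness of $G(\delta_{r})$ and the characterization $Z(A)=D({\mathcal G})$. Your argument is more elementary --- no computation with the generator of $(W(t))_{t\in{\mathbb R}}$, no case distinction in $t$ --- and it delivers as a by-product the inclusion $S(s)x\in\overline{\{G(\delta_{r})x: r\geq 0\}}$, which is exactly the $\epsilon$-period argument the paper has to redo separately in its proof of (iv); the paper's route, in exchange, makes explicit the identity relating $S(\cdot)x$ to the action of ${\mathcal G}$ on test functions, which has some independent structural interest. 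Parts (i), (ii) and (iv) of your proof coincide in substance with the paper's; in (v) you argue directly by uniqueness of the almost periodic extension instead of the paper's reduction to (ii) via the relation $T(-s)S(s)y=y$, a harmless simplification. All the auxiliary facts you invoke (closedness of $G(\delta_{r})$, $Z(A)=D({\mathcal G})$, uniqueness of the continuous almost periodic extension under $F$) are stated in the paper's preliminaries, so there is no gap.
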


\begin{proof}
The part (i) follows immediately from A1., while the proof of (ii) can be deduced by using definition and properties of extension mapping $F,$ A1. and the inclusion $ G(\delta_{t})x\in \tilde{E}:$
\begin{align*}
S(s)& S(t)x=S(s)G(\delta_{t})x
\\&=\bigl[ F(T_{G(\delta_{t})x}(\cdot)) \bigr](s)
=\bigl[ W(s)T_{G(\delta_{t})x}(\cdot) \bigr](0)
=\bigl[ W(s)W(t)T_{x}(\cdot) \bigr](0)
\\ & =\bigl[W(t+s)T_{x}(\cdot)  \bigr](0)
=\bigl[ F(T_{x}(\cdot))\bigr](t+s)=S(t+s)x.
\end{align*}
For (iii), we need only to prove that
\begin{align*}
A\int^{t}_{0}u(r;S(s)x)\, dr=u(t;S(s)x)-S(s)x,
\end{align*}
where $u(r;S(s)x):=S(r+s)x,$ $r\geq 0.$ It is checked at once that the mapping
$r\mapsto u(r;S(s)x),$ $r\geq 0$ is continuous and $u(0;S(s)x)=S(s)x.$
Let us suppose first that $t\geq -s .$ By definition of integral generator $A$, we need to prove that:
\begin{align}\label{steva}
{\mathcal G}\bigl(-\varphi^{\prime}\bigr)\int^{t}_{0}u(r;S(s)x)\, dr={\mathcal G}
(\varphi)[u(t;S(s)x)-S(s)x],\quad \varphi \in {\mathcal D}_{0}.
\end{align}
But, we have
\begin{align*}
{\mathcal G}\bigl(-& \varphi^{\prime}\bigr)\int^{t}_{0}u(r;S(s)x)\, dr
\\ &= {\mathcal G}\bigl(- \varphi^{\prime}\bigr)\Biggl(\int^{-s}_{0}+\int^{t}_{-s}\Biggr)u(r;S(s)x)\, dr
\\ &={\mathcal G}\bigl(- \varphi^{\prime}\bigr)\int^{-s}_{0}\bigl[ F(T_{x}(\cdot)) \bigr](r+s)\, dr + {\mathcal G}\bigl(- \varphi^{\prime}\bigr)\int^{t}_{-s}G(\delta_{r+s})x\, dr
\\ &={\mathcal G}\bigl(- \varphi^{\prime}\bigr)\int^{0}_{s}\bigl[ F(T_{x}(\cdot)) \bigr](r)\, dr +{\mathcal G}\bigl(- \varphi^{\prime}\bigr)\int^{t+s}_{0}G(\delta_{r})x\, dr
\\ &={\mathcal G}\bigl(- \varphi^{\prime}\bigr)\int^{0}_{s}S(r)x\, dr +{\mathcal G}(\varphi)[u(t+s;x)-x],\quad \varphi \in {\mathcal D}_{0},
\end{align*}
where the last equality follows from definitions of integral generator $A$ and mild solution $u(\cdot;x).$ Hence, \eqref{steva} is equivalent with
\begin{align}\label{steva-prim}
{\mathcal G}(\varphi)S(s)x-{\mathcal G}(\varphi)x
={\mathcal G}\bigl( \varphi^{\prime}\bigr)\int^{0}_{s}S(r)x\, dr,\quad \varphi \in {\mathcal D}_{0},
\end{align}
because $u(t;S(s)x)=G(\delta_{t+s})x=u(t+s;x). $
For this, fix a test function $\varphi \in {\mathcal D}_{0}$ and put $f(v):={\mathcal G}(\varphi)S(v)x,$ $v\leq 0.$ By the Newton-Leibniz formula, it suffices to show that
\begin{align}\label{jankovici}
f^{\prime}(v)={\mathcal G}\bigl(- \varphi^{\prime}\bigr)S(v)x,\quad v\leq 0.
\end{align}
Observe first that the property A1.-A2. along with definition of mild solution $u(\cdot;x)$ imply:
\begin{align*}
{\mathcal G}(\varphi)T_{x}(\cdot +\sigma)- {\mathcal G}(\varphi)T_{x}(\cdot)=\int^{t}_{0}{\mathcal G}\bigl(- \varphi^{\prime}\bigr)T_{x}(\cdot +\sigma)\, d\sigma,\quad \sigma \geq 0,
\end{align*}
i.e.,
\begin{align*}
W(\sigma)[{\mathcal G}(\varphi)T_{x}(\cdot)]-{\mathcal G}(\varphi)T_{x}(\cdot)=\int^{\sigma}_{0}W(s')\bigl[{\mathcal G}\bigl(- \varphi^{\prime}\bigr)T_{x}(\cdot)\bigr]\, ds',\quad \sigma \geq 0.
\end{align*}
Because of that, the pair $({\mathcal G}(\varphi)T_{x}(\cdot),{\mathcal G}(- \varphi^{\prime})T_{x}(\cdot))$ belongs to the graph of the  integral generator of $(W(t))_{t\geq 0}$ on $AP([0,\infty) : E),$
which is well known that equals to the infinitesimal generator of $(W(t))_{t\geq 0}$
($(W(t))_{t\in {\mathbb R}}$). Hence,
\begin{align}\label{bnm}
\lim_{h\rightarrow 0}\frac{W(h)-I}{h}{\mathcal G}(\varphi)T_{x}(\cdot)={\mathcal G}\bigl(- \varphi^{\prime}\bigr)T_{x}(\cdot),
\end{align}
for the topology of $AP([0,\infty) : E).$
Using \eqref{bnm}, the group property of $(W(t))_{t\in {\mathbb R}},$ the fact that $F$ is a linear surjective isometry, and elementary definitions, we get that
\begin{align*}
f^{\prime}(v)&=\lim_{h\rightarrow 0}\Biggl[W(v)\frac{W(h)-I}{h}{\mathcal G}(\varphi)T_{x}(\cdot)\Biggr](0)
\\&=\Bigl[W(v){\mathcal G}\bigl(- \varphi^{\prime}\bigr)T_{x}(\cdot)\Bigr](0)
=\Bigl[F\Bigl({\mathcal G}\bigl(- \varphi^{\prime}\bigr)T_{x}(\cdot)\Bigr)\Bigr](v)
\\&={\mathcal G}\bigl(- \varphi^{\prime}\bigr)[F(T_{x}(\cdot))](v)
={\mathcal G}\bigl(- \varphi^{\prime}\bigr)S(v)x,\quad v\leq 0,
\end{align*}
completing the proof of \eqref{jankovici} and (iii) in the case that $t\geq -s .$ The case $ t\leq -s$ is much easier and follows by applying \eqref{steva-prim} and elementary substitutions in integrals.
In order to prove (iv), it suffices to show, by definition of an $E$-valued almost periodic function, that for each $n\in {\mathbb N}$ there exists a positive number $l_{n}$ such that any interval $I$ of length $l_{n}$ contains a number $\tau_{n}$ such that
$
\| S(v+\tau_{n})x-S(v)x \| \leq 1/n,\ v\in {\mathbb R}
$ and, in particular, $
\| S(s+\tau_{n})x-S(s)x \| \leq 1/n.
$ Taking $
I \subseteq [-s+1,\infty)
$ we obtain that $S(s)x\in \overline{\{S(r)x: r\geq 0\}}.$
Since $S(r)x= G(\delta_{r})x\in \tilde{E}$ for all $r\geq  0$ and $\tilde{E}$ is closed, (iv) immediately follows from the above inclusion. To prove (v), observe that (iv) implies
$S(v)x\in \tilde{E}
$ for $v\leq 0.$ Plugging $y=S(s)x,$ the equality in (v) is equivalent  with
$S(t)y=S(t+s)S(-s)y,$ which follows from an application of (ii), where we use the equality
$T(-s)S(s)y=y,$ which is true due to (iii).
\end{proof}

\begin{remark}\label{srem}
The parts (iv) and (v) hold in the case that $\tilde{E} = Z(A),$ no matter whether this subspace is closed or not in $E.$ In general case, it is not clear whether the parts
(iv) and (v) hold provided that
$\tilde{E} \neq Z(A)$ and $\tilde{E}$ is not closed in $E.$
\end{remark}

\begin{remark}\label{srem-prim}
Suppose that $\tilde{E} = Z(A),$ and $Z(A)$ is dense in $E.$ Due to Lemma \ref{miana=zers}, for every $\tau >0,$ there exist an integer $n_{\tau}\in {\mathbb N}$
and a local $n_{\tau}$-times integrated $C$-semigroup
$(S_{n_{\tau}}^{+}(t))_{t\in [0,\tau )}$ with the integral generator
$A.$ Define
\begin{align*}
S_{n_{\tau}}^{-}(t)x:=\int^{t}_{0}\frac{(t-s)^{n_{\tau}-1}}{(n_{\tau}-1)!}CS(-s)x\, ds,\quad t\in [0,\tau ),\ x\in Z(A).
\end{align*}
Then it is not clear how one can prove that there exists a finite constant $M_{\tau}$ such that
$\|S_{n_{\tau}}^{-}(t)x\| \leq M_{\tau}\|x\|,$ $t\in [0,\tau ),$ $x\in Z(A).$ Because of that, we are not able to extend $ S_{n_{\tau}}^{-}(t)$ to a bounded linear operator ${\bf S}_{n_{\tau}}^{-}(t)$ acting on $ E,$  despite of our assumption made on density of $Z(A)$ in $E.$
It is clear that, owing to Proposition \ref{polugrupe}, we can expect from $({\bf S}_{n_{\tau}}^{-}(t))_{t\in [0,\tau)}$ to be a local $n_{\tau}$-times integrated $C$-semigroup
generated by
$-A.$ Of course, the validity of the last statement for all numbers $\tau>0$ would imply by Lemma \ref{555} that
$Z(A)\subseteq Z(-A)$ and that
$-A$ generates a  $\tilde{E}$-almost periodic $C$-distribution semigroup in $E.$
\end{remark}

In connection with Remark \ref{srem-prim}, we have the following comments to make:
\begin{itemize}
\item[1.]
Suppose that $A$ generates a global $C$-regularized semigroup $(Q_{+}(t))_{t\geq 0}.$ Denote by ${\mathcal G}_{+}$ the induced $C$-distribution semigroup generated by $A.$ Then
it can be easily seen that $(Q_{+}(t))_{t\geq 0}$ is almost periodic in the sense of \cite[Definition 1.1]{aot} (i.e., for any element $x\in E$
the mapping $t\mapsto Q_{+}(t)x,$ $t\geq 0$ is almost periodic) iff ${\mathcal G}_{+}$ is $R(C)$-almost periodic. If this is the case, \cite[Theorem 3.2]{aot} implies that the operator
$-A$ generates an almost periodic $C$-regularized semigroup $(Q_{-}(t))_{t\geq 0},$ and therefore, $-A$ generates a  $C$-distribution semigroup ${\mathcal G}_{-}$ that is $R(C)$-almost periodic.
\item[2.]
The situation is quite similar if we consider integrated $C$-semigroups. To explain this,
suppose that $A$ generates an exponentially bounded $n$-times integrated $C$-semigroup $(Q_{n,+}(t))_{t\geq 0}$ for some $n\in {\mathbb N}.$
Then \cite[Proposition 2.3.13]{knjigah} shows that there exists a positive real number $\lambda$ such that $A$ generates an exponentially bounded $((\lambda-A)^{-n}C)$-regularized semigroup $(R_{n,+}(t))_{t\geq 0},$ so that $R((\lambda-A)^{-n}C)\subseteq Z(A).$
Denote by ${\mathcal G}_{+}$ the induced exponential $C$-distribution semigroup generated by $A,$ and suppose that ${\mathcal G}_{+}$ is $R((\lambda-A)^{-n}C)$-almost periodic. Using the analysis from the above paragraph, the afore-mentioned proposition and
Remark \ref{ceo-er}(ii), it is very simple to prove that $-A$ generates an exponential $R((\lambda-A)^{-n}C)$-almost periodic $C$-distribution semigroup. The same conclusion can be formulated for weak almost periodicity.
\item[3.]
Here we would like to propose an interesting question with regard to almost periodicity of $C$-distribution cosine functions. Suppose that ${\mathbf G}$ is a $(C-DCF)$ generated by $A,$ and ${\mathbf G}$ is $\tilde{E}$-almost periodic.
Define $C(t)x:=C_{x}(t):=G(\delta_{t})x,$
$t\geq 0,$ $x\in Z_{2}(A)$
and
${\bf C}(t)x:=[F(C_{x}(\cdot))](t),$ $t\in {\mathbb R},$ $x\in \tilde{E}.$ As in the semigroup case, we have that ${\bf C}(\cdot)$ commutes with $C$ as well as that
$
\|{\bf C}(t)x\|\leq \sup_{s\geq 0}\|C(s)x\|,\ x\in \tilde{E},\ t<0,
$
and
$
\sup_{t\in {\mathbb R}}\|{\bf C}(t)x\|=\sup_{t\geq 0}\|C(t)x\|,\ x\in \tilde{E}.
$ In the present situation, we do not know to tell what conditions ensure the validity of
expected equality ${\bf C}(-t)x=C(t)x,$ for $t\geq 0$ and $x\in \tilde{E}.$
\end{itemize}

We continue by stating the following theorem.

\begin{theorem}\label{prcko}
\begin{itemize}
\item[(i)] Suppose that ${\mathcal G}$ is a \emph{(C-DS)} generated by $A,$ and ${\mathcal G}$ is $\tilde{E}$-almost periodic for some linear subspace $\tilde{E}$ of $Z(A)$. Put
\begin{align}\label{brod}
P_{r}x := \lim_{t\rightarrow \infty}\frac{1}{t}\int^{t}_{0}e^{-irs}G\bigl(\delta_{s}\bigr)x\, ds,\quad r\in {\mathbb R},\ x\in \tilde{E}.
\end{align}
Then $AP_{r}x=ir P_{r}x,$ $ r\in {\mathbb R},$ $x\in \tilde{E}.$ Furthermore, if $\tilde{E}$ is dense in $E,$ then the set $D$ consisting of all eigenvectors of operator $A$ which corresponds to purely imaginary eigenvalues of operator $A$ is total in $E$ (i.e., the linear span of $D$ is dense in $E$).
\item[(ii)] Suppose that ${\mathbf G}$ is a $(C-DCF)$ generated by $A,$ and ${\mathbf G}$ is $\tilde{E}$-almost periodic  for some linear subspace $\tilde{E}$ of $Z_{2}(A)$. Define, for every $r\in {\mathbb R}$ and $x\in \tilde{E},$ the element $P_{r}x$ through \eqref{brod}. Then
$AP_{r}x=-r^{2}P_{r}x,$ $ r\in {\mathbb R},$ $x\in \tilde{E}.$ Furthermore, if $\tilde{E}$ is dense in $E,$ then the set $H$ consisted of all eigenvectors of $A$ which corresponds to the real non-positive eigenvalues of $A$ is total in $E.$
\end{itemize}
\end{theorem}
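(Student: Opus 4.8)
The plan is to show first that, for each $r\in{\mathbb R}$ and $x\in\tilde{E}$, the vector $P_{r}x$ is either $0$ or an eigenvector of $A$ for the eigenvalue $ir$ in case (i), resp. $-r^{2}$ in case (ii); the totality statements then follow from the density of $\tilde{E}$ by means of Theorem \ref{svinja}(iv) and the relation $G(\delta_{0})x=x$. Write $T(s)x:=G(\delta_{s})x$. For $x\in\tilde{E}$ the orbit $s\mapsto T(s)x$ is almost periodic on $[0,\infty)$, hence bounded, say $\|T(s)x\|\le M_{x}$ ($s\ge0$) by Theorem \ref{svinja}(i), and the limit in \eqref{brod} exists (Theorem \ref{svinja}(iii), applied to its almost periodic extension).

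Fix $x\in\tilde{E}$ and $r\in{\mathbb R}$, and put $w_{t}:=\frac1t\int_{0}^{t}e^{-irs}T(s)x\,ds$. Using Fubini's theorem, the mild-solution identity for $x$ and the closedness of $A$, one checks that $w_{t}\in Z(A)$ (resp. $Z_{2}(A)$) and that, for every $\tau\ge0$, $T(\tau)w_{t}=\frac1t\int_{0}^{t}e^{-irs}T(\tau+s)x\,ds$ in case (i) --- here one uses A1., i.e. $G(\delta_{\tau})G(\delta_{s})x=G(\delta_{\tau+s})x$ --- resp. $T(\tau)w_{t}=\frac1{2t}\int_{0}^{t}e^{-irs}\bigl[T(\tau+s)x+T(|\tau-s|)x\bigr]\,ds$ in case (ii), by the d'Alembert relation $2G(\delta_{\tau})G(\delta_{s})x=G(\delta_{\tau+s})x+G(\delta_{|\tau-s|})x$. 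A short computation with these formulas, letting $t\to\infty$ with $\tau$ fixed and invoking \eqref{brod} for the integrals over shifted intervals, shows that $T(\tau)w_{t}\to v(\tau)$, where $v(\tau):=e^{ir\tau}P_{r}x$ in case (i) and $v(\tau):=\cos(r\tau)P_{r}x$ in case (ii); note $v$ is continuous, $v(0)=P_{r}x$, and $\|T(\tau)w_{t}\|\le M_{x}$ for all $\tau,t$. Now fix $\tau\ge0$ and pass to the limit $t\to\infty$ in the mild-solution identity $A\int_{0}^{\tau}T(\sigma)w_{t}\,d\sigma=T(\tau)w_{t}-w_{t}$ (resp. $A\int_{0}^{\tau}(\tau-\sigma)T(\sigma)w_{t}\,d\sigma=T(\tau)w_{t}-w_{t}$): dominated convergence (using the uniform bound $M_{x}$) and $w_{t}\to P_{r}x$ give convergence of both sides, so the closedness of $A$ yields $P_{r}x\in Z(A)$ (resp. $Z_{2}(A)$) with mild solution $u(\cdot;P_{r}x)=v(\cdot)$. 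Inserting $u(\sigma;P_{r}x)=e^{ir\sigma}P_{r}x$, resp. $\cos(r\sigma)P_{r}x$, back into the corresponding mild-solution identity for $P_{r}x$ and evaluating the elementary scalar integrals $\int_{0}^{\tau}e^{ir\sigma}\,d\sigma$, resp. $\int_{0}^{\tau}(\tau-\sigma)\cos(r\sigma)\,d\sigma=(1-\cos(r\tau))/r^{2}$, and choosing a suitable $\tau$ (any $\tau$ with $e^{ir\tau}\ne1$, resp. $\cos(r\tau)\ne1$, when $r\ne0$; the case $r=0$ is immediate), one obtains $AP_{r}x=ir\,P_{r}x$, resp. $AP_{r}x=-r^{2}P_{r}x$.

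Assume now $\tilde{E}$ is dense in $E$. If $x^{\ast}\in E^{\ast}$ annihilates all eigenvectors of $A$ attached to purely imaginary (resp. real non-positive) eigenvalues, then by the previous paragraph $x^{\ast}(P_{r}x)=0$ for every $r\in{\mathbb R}$ and $x\in\tilde{E}$. Since $x^{\ast}$ commutes with the limit in \eqref{brod}, the scalar almost periodic function $s\mapsto x^{\ast}(T(s)x)$ has all its Bohr coefficients equal to $0$, hence vanishes identically by Theorem \ref{svinja}(iv); evaluating at $s=0$ and using $T(0)x=G(\delta_{0})x=x$ gives $x^{\ast}(x)=0$ for all $x\in\tilde{E}$, so $x^{\ast}=0$. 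Therefore the linear span of $D$ (resp. $H$) is dense in $E$.

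The step I expect to be most delicate is the one placing $P_{r}x$ in the solution space: since the operators $G(\delta_{\tau})$ are unbounded in general, one cannot apply $G(\delta_{\tau})$ to the convergent net $w_{t}\to P_{r}x$ directly, so the passage to the limit must be channelled through the closed operator $A$ and the mild-solution identity; the two facts that make this work are the stability of $Z(A)$ (resp. $Z_{2}(A)$) under Bochner integration and the uniform bound $M_{x}$ coming from almost periodicity (Theorem \ref{svinja}(i)). The cosine case differs only by carrying along the extra term $T(|\tau-s|)x$, which introduces no new idea. One might also try to reduce (ii) to (i) on $E\oplus E$, where ${\mathcal G}$ is a (${\mathcal C}$-DS) generated by ${\mathcal A}$ and the eigenvectors of ${\mathcal A}$ for $ir$ are exactly $\binom{y}{iry}$ with $Ay=-r^{2}y$, but ensuring that the pertinent orbit remains almost periodic in $E\oplus E$ is itself problematic, so the direct argument is preferable.
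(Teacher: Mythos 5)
Your proof is correct, but it takes a genuinely different route from the paper's. The paper never places $P_{r}x$ in the solution space: it works exclusively through the bounded operators ${\mathcal G}(\varphi)$, resp.\ ${\mathbf G}(\varphi)$, integrating the Bohr mean by parts and then invoking the distributional definition of the generator (so that in case (i) one lands on ${\mathcal G}(-\varphi^{\prime})P_{r}x={\mathcal G}(\varphi)\bigl[irP_{r}x\bigr]$ for all $\varphi\in{\mathcal D}_{0}$, and in case (ii) on ${\mathbf G}(\varphi^{\prime})P_{r}x=-r^{2}{\mathbf G}(I(\varphi))P_{r}x$); in the cosine case the paper needs the extra ingredient that ${\mathbf G}(\varphi^{\prime})\int_{0}^{t}G(\delta_{s})x\,ds$ stays bounded, which it extracts from \eqref{noviw} via the Landau inequality (see \eqref{bvx}) before using the double integration by parts \eqref{stari-novi}. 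You instead argue at the level of orbits: averaging them into $w_{t}$, using A1.\ resp.\ the d'Alembert relation together with commutation of the closed operators $G(\delta_{\tau})$ with Bochner integrals to compute $G(\delta_{\tau})w_{t}$, and then letting $t\rightarrow\infty$ in the mild-solution identity so that closedness of $A$ identifies $P_{r}x$ as an element of $Z(A)$ (resp.\ $Z_{2}(A)$) with explicit mild solution $e^{ir\cdot}P_{r}x$ (resp.\ $\cos(r\cdot)P_{r}x$), after which the eigenvalue relation is elementary. Your route yields slightly more (membership of $P_{r}x$ in the solution space, with a periodic orbit) and avoids test functions and the Landau-inequality trick altogether; the price is the bookkeeping you flag yourself, namely checking $w_{t}\in Z(A)$ resp.\ $Z_{2}(A)$ and the formula for $G(\delta_{\tau})w_{t}$, which does go through by Fubini/Hille-type arguments using the facts recorded in Section \ref{intro} ($G(\delta_{t})(Z_{2}(A))\subseteq Z_{2}(A)$, the d'Alembert equation, and the shifted Bohr means of Theorem \ref{svinja}(iii)). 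The totality part is the same in both proofs (Theorem \ref{svinja}(iv) plus Hahn--Banach and $G(\delta_{0})x=x$); the paper simply delegates it to the final part of the proof of \cite[Theorem 2.1]{zhengliu}, which is exactly the argument you spell out.
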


\begin{proof}
Let $x\in \tilde{E}$ and $ r\in {\mathbb R}$ be fixed. We will only prove that $AP_{r}x=ir P_{r}x$ in semigroup case and that $AP_{r}x=-r^{2}P_{r}x,$ in cosine operator function case; then the remaining part of proof of (i)-(ii) follows by copying the final part of proof of \cite[Theorem
2.1]{zhengliu}. Consider first the semigroup case. Then we know that
 \begin{align*}
A\int^{t}_{0}u(s;x)\, ds=u(t;x)-x,\quad t\geq 0,\ x\in \tilde{E},
\end{align*}
i.e., that
\begin{align*}
{\mathcal G}\bigl(-& \varphi^{\prime}\bigr)\int^{t}_{0}G\bigl(\delta_{s}\bigr)x\, ds={\mathcal G}(\varphi)\Bigl[G\bigl(\delta_{t}\bigr)x-x\Bigr],\quad t\geq 0,\ x\in \tilde{E}, \ \varphi \in {\mathcal D}_{0}.
\end{align*}
Applying the partial integration, we get from the above that
\begin{align*}
{\mathcal G}\bigl(-& \varphi^{\prime}\bigr)P_{r}x
\\ &={\mathcal G}\bigl(- \varphi^{\prime}\bigr)\lim_{t\rightarrow \infty}\frac{1}{t}\Biggl[ e^{-irt}\int^{t}_{0}G\bigl(\delta_{s}\bigr)x\, ds+ir  \int^{t}_{0}e^{-irs}\int^{s}_{0}G\bigl(\delta_{v}\bigr)x\, dv \, ds \Biggr]
\\ & =\lim_{t\rightarrow \infty}\frac{1}{t}\Biggl[ e^{-irt}{\mathcal G}(\varphi)\Bigl \{G\bigl(\delta_{t}\bigr)x-x\Bigr\} +ir \int^{t}_{0}e^{-irs} {\mathcal G}(\varphi)\Bigl\{G\bigl(\delta_{s}\bigr)x-x\Bigr \}\, ds \Biggr],
\end{align*}
which can be simply shown to equals
$ir \int^{t}_{0}e^{-irs} {\mathcal G}(\varphi)G(\delta_{s})x\, ds $
by the boundedness of function $G\bigl(\delta_{\cdot}\bigr)x.$ This implies by definition of integral generator $A$ that $AP_{r}x=ir P_{r}x,$ as claimed. Consider now the cosine operator function case.
Then we have that \cite{knjigaho}:
 \begin{align*}
A\int^{t}_{0}(t-s)G\bigl(\delta_{s}\bigr)x\, ds=G\bigl(\delta_{t}\bigr)x-x,\quad t\geq 0,\ x\in \tilde{E},
\end{align*}
i.e., that
\begin{align}\label{noviw}
{\mathbf G}\bigl(& \varphi^{\prime}\bigr)\int^{t}_{0}(t-s)G\bigl(\delta_{s}\bigr)x\, ds={\mathbf G}(I(\varphi))\Bigl[G\bigl(\delta_{t}\bigr)x-x\Bigr],\quad t\geq 0,\ x\in \tilde{E}, \ \varphi \in {\mathcal D}_{0}.
\end{align}
Fix an element $x\in \tilde{E}$ and a test function $\varphi \in {\mathcal D}_{0},$ and define after that the function
$H(t):={\mathbf G}( \varphi^{\prime})\int^{t}_{0}(t-s)G\bigl(\delta_{s}\bigr)x\, ds,$ $t\geq 0.$ Then \eqref{noviw} in combination with the almost periodicity of function $G\bigl(\delta_{\cdot}\bigr)x$ implies that
$H(t)$ is two times continuously differentiable, with $H(t)$ and $H^{\prime \prime}(t)$ being bounded for $t\geq 0.$ Then the Landau inequality
\begin{align*}
\Bigl| \bigl \langle x^{\ast} , H^{\prime}(t) \bigr \rangle \Bigr|^{2}\leq 4  \Bigl| \bigl \langle x^{\ast} , H(t) \bigr \rangle \Bigr| \cdot \Bigl| \bigl \langle x^{\ast} , H^{\prime \prime}(t) \bigr \rangle \Bigr|,\quad t\geq 0,\ x^{\ast} \in E^{\ast},
\end{align*}
shows that the function $t\mapsto H^{\prime}(t),$ $t\geq 0$ is weakly bounded and therefore bounded. Hence, there exists a finite constant $M>0$ such that
\begin{align}\label{bvx}
\bigl\| H^{\prime}(t) \bigr\| = \Biggl\| {\mathbf G}\bigl( \varphi^{\prime}\bigr)\int^{t}_{0}G\bigl(\delta_{s}\bigr)x\, ds \Biggr\|   \leq M,\quad t\geq 0.
\end{align}
On the other hand,
applying the partial integration twice, we get that:
\begin{align}
\notag \int^{t}_{0}e^{-irs}G\bigl( \delta_{s}\bigr)x\, ds & =e^{-irt}\int^{t}_{0}G\bigl(\delta_{s}\bigr)x\, ds +ire^{-irt}\int^{t}_{0}(t-s)G\bigl(\delta_{s}\bigr)x\, ds
\\\label{stari-novi} & -r^{2}\int^{t}_{0}e^{-irs}\int^{s}_{0}(s-v)G\bigl(\delta_{v}\bigr)x\, dv\, ds.
\end{align}
Using \eqref{noviw} and \eqref{stari-novi}, we obtain that:
\begin{align*}
{\mathbf G}\bigl( \varphi^{\prime}\bigr)P_{r}x&=\lim_{t\rightarrow \infty}e^{-irt}\frac{1}{t}\Biggl[{\mathbf G}\bigl( \varphi^{\prime}\bigr)\int^{t}_{0}G\bigl(\delta_{s}\bigr)x\, ds\Biggr]
\\&+{\mathbf G}(I(\varphi))\lim_{t\rightarrow \infty}ire^{-irt}\frac{1}{t}\Bigl[G\bigl(\delta_{t}\bigr)x-x\Bigr]
\\&-r^{2}\lim_{t\rightarrow \infty}\frac{1}{t}\int^{t}_{0}e^{-irs}{\mathbf G}(I(\varphi))\Bigl[G\bigl(\delta_{s}\bigr)x-x\Bigr]\, ds.
\end{align*}
This equality in combination with \eqref{bvx} and the boundedness of function $G\bigl(\delta_{\cdot}\bigr)x$ shows that ${\mathbf G}( \varphi^{\prime})P_{r}x=-r^{2}{\mathbf G}(I(\varphi))P_{r}x.$ Since $\varphi \in {\mathcal D}_{0}$ was arbitrary, we get that
$AP_{r}x=-r^{2}P_{r}x,$ finishing the proof.
\end{proof}

Concerning the subspace weak almost periodicity of $C$-distribution semigroups, we have the following proposition.

\begin{proposition}\label{goldbergi}
Suppose that ${\mathcal G}$ is a \emph{(C-DS)} generated by $A,$ and ${\mathcal G}$ is $\tilde{E}$-weakly almost periodic for some linear subspace $\tilde{E}$ of $Z(A)$. If $\tilde{E}$ is dense in $E$ and $E$ is weakly sequentially complete, then the set $D$ defined above is total in $E$ and  ${\mathcal G}$ is $span(D)$-almost periodic.
\end{proposition}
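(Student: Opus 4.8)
The plan is to carry out the argument of Theorem~\ref{prcko}(i) in the weak topology; the new feature is that, under mere weak almost periodicity, the Bohr-type averages that define $P_{r}x$ converge only weakly, and the hypothesis that $E$ is weakly sequentially complete is exactly what will make those weak limits land in $E$.

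First I would fix $x\in\tilde{E}$ and $r\in{\mathbb R}$ and make sense of $P_{r}x$ as a weak limit. For each $x^{\ast}\in E^{\ast}$ the scalar function $g_{x^{\ast}}:=\langle x^{\ast},G(\delta_{\cdot})x\rangle$ is almost periodic, hence bounded, so $t\mapsto G(\delta_{t})x$ is weakly bounded and therefore norm bounded on $[0,\infty)$. Extending $g_{x^{\ast}}$ to the whole line by the isometry $F$ and applying Theorem~\ref{svinja}(iii), the limit $P_{r}(g_{x^{\ast}})=\lim_{t\to\infty}\frac{1}{t}\int_{0}^{t}e^{-irs}g_{x^{\ast}}(s)\,ds$ exists for every $x^{\ast}$. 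Consequently the net $\phi(t):=\frac{1}{t}\int_{0}^{t}e^{-irs}G(\delta_{s})x\,ds$ is norm bounded, and along any sequence $t_{n}\to\infty$ the sequence $(\phi(t_{n}))_{n}$ is weakly Cauchy with $\langle x^{\ast},\phi(t_{n})\rangle\to P_{r}(g_{x^{\ast}})$; weak sequential completeness of $E$ then yields a weak limit, whose value at each $x^{\ast}$ is the sequence-independent number $P_{r}(g_{x^{\ast}})$. Hence all such limits agree; calling the common value $P_{r}x$, we get $\phi(t)\to P_{r}x$ weakly as $t\to\infty$ and $\langle x^{\ast},P_{r}x\rangle=P_{r}(g_{x^{\ast}})$, so that \eqref{brod}, read in the weak sense, is meaningful.

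Next I would show $AP_{r}x=irP_{r}x$ by repeating the computation from the proof of Theorem~\ref{prcko}(i). Starting from the mild solution identity ${\mathcal G}(-\varphi')\int_{0}^{t}G(\delta_{s})x\,ds={\mathcal G}(\varphi)[G(\delta_{t})x-x]$, $\varphi\in{\mathcal D}_{0}$, one integrates by parts to obtain
\begin{align*}
\frac{1}{t}{\mathcal G}\bigl(-\varphi'\bigr)\int_{0}^{t}e^{-irs}G\bigl(\delta_{s}\bigr)x\,ds=\frac{e^{-irt}}{t}{\mathcal G}(\varphi)\bigl[G\bigl(\delta_{t}\bigr)x-x\bigr]+\frac{ir}{t}\int_{0}^{t}e^{-irs}{\mathcal G}(\varphi)\bigl[G\bigl(\delta_{s}\bigr)x-x\bigr]\,ds.
\end{align*}
Since ${\mathcal G}(\varphi)$ and ${\mathcal G}(-\varphi')$ are bounded, hence weakly continuous and interchangeable with the Bochner integrals, and $G(\delta_{\cdot})x$ is bounded, letting $t\to\infty$ annihilates the boundary term and the term carrying $\int_{0}^{t}e^{-irs}x\,ds$ and leaves ${\mathcal G}(-\varphi')P_{r}x=ir\,{\mathcal G}(\varphi)P_{r}x={\mathcal G}(\varphi)(irP_{r}x)$ for every $\varphi\in{\mathcal D}_{0}$. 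By the definition of the generator $A=G(-\delta')$ this says $(P_{r}x,irP_{r}x)\in A$, i.e. $AP_{r}x=irP_{r}x$; in particular, whenever $P_{r}x\neq0$ it is an eigenvector of $A$ for the purely imaginary eigenvalue $ir$, so $P_{r}x\in D\cup\{0\}$ for all $r\in{\mathbb R}$, $x\in\tilde{E}$.

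It then remains to deduce totality of $D$ and to quote Proposition~\ref{spremte-se}. If $x^{\ast}\in E^{\ast}$ annihilates $span(D)$, then $\langle x^{\ast},P_{r}x\rangle=P_{r}(g_{x^{\ast}})=0$ for all $r\in{\mathbb R}$ and all $x\in\tilde{E}$, so Theorem~\ref{svinja}(iv) applied to $Fg_{x^{\ast}}$ gives $g_{x^{\ast}}\equiv0$; since $G(\delta_{0})x=x$ for $x\in\tilde{E}\subseteq Z(A)$, evaluating at $0$ gives $\langle x^{\ast},x\rangle=0$ on the dense set $\tilde{E}$, whence $x^{\ast}=0$ and $span(D)$ is dense in $E$. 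Finally, that ${\mathcal G}$ is $span(D)$-almost periodic is exactly the assertion of Proposition~\ref{spremte-se}(i). I expect the only genuine obstacle to be the construction in the second paragraph — turning the merely weak convergence of the averages $\phi(t)$ into an honest limit vector $P_{r}x\in E$ that the closed operator $A$ can be applied to — which is precisely where weak sequential completeness enters; everything else is a transcription of the proof of Theorem~\ref{prcko}(i).
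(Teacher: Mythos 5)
Your proof is correct and follows essentially the same route as the paper: the paper writes the identity $(ir-A)\frac{1}{t}\int^{t}_{0}e^{-irs}G(\delta_{s})x\,ds=\frac{1}{t}[x-e^{-irt}G(\delta_{t})x]$ and then defers to the arguments of Bart--Goldberg and Zheng--Liu (weak convergence of the Bohr averages via weak sequential completeness, the eigenvector identity $AP_{r}x=irP_{r}x$, and totality of $D$ via Hahn--Banach and uniqueness of the Bohr transform), concluding with Proposition \ref{spremte-se}(i) exactly as you do. Your write-up simply fills in the details that the paper outsources to those references, so no gap.
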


\begin{proof}
Applying the partial integration, we easily get that
\begin{align*}
(ir-A)\frac{1}{t}\int^{t}_{0}e^{-irs}G\bigl( \delta_{s} \bigr)x\, ds=\frac{1}{t}\bigl[ x-e^{-irt} G\bigl( \delta_{t} \bigr)x\bigr],\quad t>0,\ r\in {\mathbb R},\
x\in \tilde{E}.
\end{align*}
Then we can proceed as in the proofs of \cite[Theorem 3]{bart} and \cite[Theorem 2.1]{zhengliu} in order to see that the set $D$ is total in $E.$ Hence, the statement of proposition is a simple consequence of Proposition \ref{spremte-se}(i).
\end{proof}

Concerning the subspace weak almost periodicity of $C$-distribution cosine functions, we would like to propose the following problem (cf. also \cite[Theorem  1.5, pp. 246-247]{x263}):
Suppose that ${\mathbf G}$ is a $(C-DCF)$ generated by $A,$ and ${\mathbf G}$ is $\tilde{E}$-almost periodic for some linear subspace $\tilde{E}$ of $Z_{2}(A)$. If $\tilde{E}$ is dense in $E$ and $E$ is weakly sequentially complete, is it true that the set $H$ is total in $E$ and that ${\mathbf G}$ is $span(H)$-almost periodic?

In \cite{aot}, we have also investigated almost periodic (degenerate) strongly continuous semigroups, cosine operator functions and the associated sine operator functions acting on Banach spaces which do not contain an isomorphic copy of the space
$c_{0}$ (cf. Theorem \ref{svinja}(vi)). The interested reader may try to formulate an analogue of Theorem \ref{prcko}(ii) in the case that ${\mathbf G}$ is a $(C-DCF)$ generated by $A$ and the mapping $t\mapsto \int^{t}_{0}G(\delta_{s})x,$ $t\geq 0$ is $\tilde{E}$-almost periodic for all elements $x$ belonging to some linear subspace $\tilde{E}$ of $Z_{2}(A)$.

We close the paper with the observation that the assertions of Proposition \ref{spremte-se}, Proposition \ref{polugrupe}, Theorem \ref{prcko} and Proposition \ref{goldbergi} hold for $C$-ultradistribution semigroups of $\ast$-class and $C$-ultradistribution cosine functions of $\ast$-class
in Banach spaces (cf. \cite{FKP} for the notion).

\end{document}